\documentclass[12pt]{amsart}
\usepackage{amsmath, amssymb}
\usepackage{graphicx}
\usepackage{url}

\newtheorem{theorem}{Theorem}
\newtheorem{conjecture}[theorem]{Conjecture}
\newtheorem{observation}[theorem]{Observation}
\newtheorem{corollary}[theorem]{Corollary}

\newtheorem{open}[theorem]{Open Problem}
\newtheorem{definition}{Definition}

\newtheorem{remark}[]{Remark}
\theoremstyle{definition}

\def\bth{\begin{theorem}}
\def\eth{\end{theorem}}
\def\bc{\begin{corollary}}
\def\ec{\end{corollary}}
\def\bcj{\begin{conjecture}}
\def\ecj{\end{conjecture}}

\title[]{Chooser-Picker Degree Games for Regular Graphs}

\author[Gy\H{o}rffy]{Lajos Gy\H{o}rffy }\address{Bolyai Institute,
University of Szeged and John von Neumann University, Kecskemét}

\email{lgyorffy@math.u-szeged.hu}

\keywords{Degree game, Chooser-Picker, Client-Waiter, Regular graphs, Graph games}
\subjclass[2020]{05C57, 91A46}

\begin{document}

\begin{abstract}
In the unbiased Chooser-Picker (also known as Client-Waiter) game played on the edge set of a graph, Picker offers a pair of unclaimed edges in each turn, Chooser claims one, and the remaining edge goes back to Picker. We study the Chooser-Picker (C-P) degree game played on $d$-regular graphs, where Chooser aims to maximize the maximum degree of their induced subgraph, and Picker's objective is to defend every vertex by securing a certain minimum degree in Picker's own subgraph. While classical static pairing strategies guarantee a minimum degree of at least $\lfloor d/4 \rfloor$ for Breaker on general $d$-regular graphs in Maker-Breaker (M-B) games and for Picker in C-P games, outperforming this threshold has been a major open challenge in both frameworks. According to the foundational monograph of J. Beck, this challenge stands as the first among the seven most humiliating problems in combinatorial game theory.
Our main result is that Picker can beat the $d/4$ bound. First, we prove that Picker can always guarantee a degree of at least one at every vertex on any $3$-regular graph. Based upon this we introduce a direct strategy to prove that Picker can secure a degree of at least $\lfloor d/3 \rfloor$ at every vertex for any $d$-regular graph. This highlights a fundamental structural advantage that Picker usually possesses over Breaker in sparse local games.
\end{abstract}

\maketitle

\section{Introduction} \label{intro}

The mathematical analysis of positional games on graphs, pioneered by Beck \cite{BB} and extensively developed under the Maker-Breaker framework, has experienced significant growth over the past few decades (e.g., \cite{BP, BMP, Beck2, Hefetz}). Among the various formulations, the unbiased Chooser-Picker (C-P) and Picker-Chooser (P-C) games (see, \cite{BB, Bed1, Bed3, Cser, CMP, CMP2})---originally introduced by J. Beck and frequently referred to as Client-Waiter and Waiter-Client games in modern literature, (\cite{Bed2, CGH, Dvo, Dvo2, HKT1, HKT2, Kriv1, Tan})---present a compelling dynamic of asymmetric control.

In each turn of a standard C-P (or P-C) game played on the edge set of a base graph $G$, Picker selects a pair of previously unclaimed edges and offers them to Chooser. Chooser must claim exactly one of these edges for their own subgraph, while the remaining edge is automatically allocated to Picker. This process continues sequentially until the entire edge set of the base board is exhausted.

This paper focuses specifically on the \emph{degree game} variant mostly within this C-P framework. In a C-P degree game, Chooser's objective mimics that of Maker in a standard Maker-Breaker game, aiming to maximize the maximum degree of their induced subgraph. Conversely, Picker acts as the defending player, analogous to Breaker, whose goal is to defend the graph globally by ensuring that Picker secures a high minimum degree in their own subgraph at the end of the game. (The roles are swapped in the P-C framework.) To formalize Picker's performance in the C-P degree game, for any simple graph $G$, similarly to $\delta_B(G)$ introduced in \cite{cube} for M-B games for Breaker, we define $\delta_P(G)$ to be the maximum minimum degree that Picker can guarantee in their own subgraph at the end of the game.

On dense boards, such as the complete graph $K_n$, Picker can leverage global potential function arguments to achieve a nearly equitable split of $n/2 - o(n)$ edges per vertex, see Krivelevich et al.\ \cite{Kriv2, Kriv1}. However, on fixed sparse $d$-regular graphs, local topological constraints severely restrict Picker's options. 
For decades, the standard benchmark for any $(1:1)$ degree game on a $d$-regular graph has been the $\lfloor d/4 \rfloor$ bound. This baseline is established via a classical static pairing strategy derived from an Eulerian orientation of $G$, forcing Chooser to award Picker at least $\lfloor d/4 \rfloor$ edges at every single vertex.

Breaking the $\lfloor d/4 \rfloor$ bound for sparse regular graphs has stood as a long-standing open problem in combinatorial game theory. Indeed, for the closely related Maker-Breaker framework, J. Beck \cite{BB} listed this challenge as the first among the \emph{seven most humiliating open problems} of positional game theory. While static graph decompositions and global factorization methods can sometimes be utilized to outperform this threshold on specific symmetric structures (as explored in our companion paper \cite{cube}), such techniques often fail to translate to general graphs.

The main contribution of this paper is to demonstrate that Picker can utilize the sequential, dynamic nature of the offering mechanism to universally break through this $\lfloor d/4 \rfloor$ threshold on \emph{any} regular graph. Our first core result establishes the strategic foundation on the sparsest non-trivial regular graph family:

\begin{theorem} \label{CP3}
	In the Chooser-Picker degree game played on an arbitrary $3$-regular graph $G$, Picker has a strategy to secure a degree of at least one at every vertex.
\end{theorem}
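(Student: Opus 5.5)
The plan is a dynamic greedy strategy: Picker always offers two unclaimed edges sharing a common uncovered vertex, and keeps track of which vertices are still unprotected. Call a vertex \emph{covered} once Picker owns an edge incident to it, and \emph{uncovered} otherwise. Offering a pair $\{vx,vy\}$ of unclaimed edges at a vertex $v$ covers $v$ whatever Chooser does, and also covers the other endpoint of the edge Picker receives. The only danger is an uncovered vertex losing its unclaimed edges to Chooser. So the strategy must ensure that no uncovered vertex ever gets down to one unclaimed edge.

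\textbf{Invariant.} At the start of each of Picker's turns, every uncovered vertex has all three of its edges unclaimed, with at most one exception. The exception, the \emph{urgent} vertex $u$, is uncovered and has exactly two unclaimed edges (Chooser owns the third).

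\textbf{Strategy.}
\begin{itemize}
\item If an urgent vertex $u$ exists, Picker offers its two unclaimed edges $\{ux,uy\}$.
\item Otherwise, if some uncovered vertex $v$ exists, Picker offers any two of its three (unclaimed) edges.
\item If every vertex is covered, Picker offers arbitrary pairs. How a possible final single edge is allocated is then irrelevant.
\end{itemize}

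\textbf{Checking the invariant is preserved.} Say Picker offers $\{ux,uy\}$, where $u$ is the urgent or chosen uncovered vertex, and Chooser takes $ux$.
\begin{itemize}
\item Then $u$ becomes covered.
\item The vertex $y$ becomes covered (if it was not already).
\item The only other vertex whose number of unclaimed edges drops is $x$, and it drops by exactly one. If $x$ was already covered, nothing changes. If $x$ was uncovered, then $x\neq u$ and $x$ is not urgent, since the urgent vertex, if any, is $u$. By the invariant $x$ had three unclaimed edges, so it now has two and becomes the unique new urgent vertex.
\end{itemize}
No other vertex is affected, so the invariant holds again. Initially there is no urgent vertex and every vertex has three unclaimed edges, so the invariant holds at the start.

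\textbf{Termination and parity.} The point needing care is that Picker is never forced to stop, or to leave a lone edge, while an uncovered vertex remains. The total number of edges $3n/2$ may be odd. Suppose some vertex is still uncovered. By the invariant, either it is urgent, with two unclaimed edges at it, or it has three unclaimed edges. In both cases at least two unclaimed edges remain, and the prescribed offer is legal. Hence the game cannot end, and no single leftover edge can occur, while an uncovered vertex exists. When the board is exhausted every vertex is covered, i.e.\ Picker has degree at least one everywhere, which proves Theorem~\ref{CP3}.

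I expect the main obstacle to be the rigorous verification that at most one urgent vertex exists at any time, together with this end-game parity check. The first rests on the fact that Chooser's single claimed edge decreases the free degree of only one vertex other than the offering vertex $u$. The second rests on the invariant guaranteeing at least two unclaimed edges whenever some vertex is uncovered. If the graph has multiple components or the convention for an odd final edge differs, these same two facts still carry the argument.
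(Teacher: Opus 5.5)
Your proof is correct and is essentially the paper's argument. Your urgent vertex is the paper's dangerous vertex, and your strategy is its reactive cherry-offering: offer at the unique dangerous vertex, and start a fresh cherry when none exists. Your explicit invariant (every uncovered vertex other than the urgent one still has all three edges free) and your termination/parity check make rigorous two points the paper leaves implicit. These are that Chooser's edge can only land on an untouched or already safe vertex, and that an odd leftover edge cannot hurt Picker.
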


By building upon the algorithmic intuition of this cubic base case, we introduce a direct offering mechanism that naturally scales to higher degrees. Via this direct sequential strategy, we establish our main universal result:

\begin{theorem} \label{CPd}
	In the C-P degree game played on an arbitrary $d$-regular graph $G$, Picker has a strategy to secure a degree of at least $\lfloor d/3 \rfloor$ at every single vertex.
\end{theorem}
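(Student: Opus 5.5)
The plan is to reduce Theorem~\ref{CPd} to the cubic case, Theorem~\ref{CP3}, by splitting vertices. Write $d=3k+r$ with $k=\lfloor d/3\rfloor$ and $r\in\{0,1,2\}$. For every vertex $v$ of $G$, partition the $d$ edges at $v$ arbitrarily into $k$ groups of size $3$ and (if $r>0$) one group of size $r$. Replace $v$ by $k$ new vertices $v_1,\dots,v_k$ of degree $3$, plus one vertex $v_0$ of degree $r$ if $r>0$. Each edge $uv$ of $G$ then becomes an edge between the copy of $u$ and the copy of $v$ whose groups contain it.

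The resulting graph $H$ has the same edge set as $G$ and every vertex has degree at most $3$. It is simple: $G$ has no loops, and two copies $u_i,v_j$ can share at most the single edge $uv$. It suffices to show that Picker can get at least one edge at every degree-$3$ vertex of $H$. Indeed, then $v$ receives at least one Picker edge from each of $v_1,\dots,v_k$, and these are distinct edges because the groups are disjoint. This gives degree at least $k$ in $G$.

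\textbf{Step 2: making $H$ cubic.} Let $D$ be the set of deficient vertices of $H$ (those of degree $1$ or $2$); they need no protection. Take two disjoint copies $H,H'$ and join each $x\in D$ of degree $2$ to its twin $x'$. For a degree-$1$ vertex $x$, attach to $x$ and to $x'$ a small gadget that raises both to degree $3$ while keeping the graph simple and cubic; for instance, replace a would-be double edge $xx'$ by two paths through a copy of $K_4$ minus an edge. The result is a $3$-regular graph $\widehat H$ containing $H$. Every added (``fake'') edge is incident only to vertices of $D$, their twins in $H'$, or gadget vertices; none of these is a degree-$3$ vertex of the original $H$.

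\textbf{Step 3: simulation.} Picker fixes the strategy $\mathcal S$ from Theorem~\ref{CP3} on $\widehat H$ and runs an imaginary game on $\widehat H$ alongside the real game on $E(H)=E(G)$.
\begin{itemize}
\item When $\mathcal S$ offers two real edges, Picker offers them and copies Chooser's answer into the imaginary game.
\item When it offers two fake edges, Picker resolves the turn in the imaginary game with an arbitrary Chooser choice.
\item When it offers one real edge $e$ and one fake edge $f$, Picker imagines that Chooser took $e$ and Picker took $f$, and $e$ is set aside.
\end{itemize}
After the imaginary game ends, the set-aside real edges are offered in arbitrary pairs; a possible last single edge is given to either player. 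These steps only add Picker edges. Every imaginary Picker edge at a degree-$3$ vertex $x$ of $H$ is real, because fake edges never touch such $x$. Every such real edge was genuinely received by Picker. Since $\mathcal S$ guarantees each vertex of $\widehat H$ at least one imaginary Picker edge, every degree-$3$ vertex of $H$ gets a real Picker edge, as required. Imagined Chooser moves are legitimate, since $\mathcal S$ works against every Chooser.

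\textbf{Main obstacle.} The delicate point is Step~2: building the gadget for degree-$1$ vertices so that $\widehat H$ stays simple and $3$-regular, and checking that no fake edge touches a vertex that must be protected. If the gadget becomes awkward, the fallback is to re-read the proof of Theorem~\ref{CP3} and check that it works verbatim for subcubic graphs where only degree-$3$ vertices must be defended. Deficient vertices only make Picker's task easier, so this is very plausible. The rest of the argument, namely the vertex splitting and the monotonicity of the simulation, is routine.
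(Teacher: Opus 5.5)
Your proof is correct, but it takes a different route from the paper. The paper does not use Theorem~\ref{CP3} as a black box. Instead it runs the cherry walk from that proof directly on $G$: Picker keeps offering two free edges at the vertex Chooser has just entered, as long as that vertex has two, and otherwise starts a new walk. It then argues vertex by vertex that each passage through $v$ consumes three edges at $v$, one of which goes to Picker.

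You build this triple structure into the board itself. Each fixed triple becomes a cubic vertex of $H$, and Theorem~\ref{CP3} does all the game-theoretic work. Composed with the cubic strategy, your Picker is in fact playing the paper's walk with one extra rule: the cherry at $v$ consists of the two remaining edges of the triple through which Chooser entered.

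Your route buys rigor and modularity. The counting becomes trivial, and the argument yields Theorem~\ref{CPgen} verbatim, since the splitting works for any degree sequence. The paper's direct count must absorb Chooser edges at $v$ that are not followed by a cherry at $v$: an entry when fewer than two free edges remain, and leftover edges once no cherry can be offered. The paper treats these only informally. They can only occur once the free degree at $v$ has dropped to at most $2$. So Picker receives at least a third of at least $\deg_G(v)-2$ edges, i.e.\ at least $\lceil(\deg_G(v)-2)/3\rceil=\lfloor \deg_G(v)/3\rfloor$. The paper's route, in turn, needs no auxiliary graph and no simulation.

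Two small repairs are needed in your Step~2. First, a degree-$1$ vertex needs two copies of $K_4$ minus an edge, one for each would-be parallel edge $xx'$, each inserted as a path from $x$ to $x'$. A single copy can absorb only two of the four required edge-ends. Second, ``fake'' must include all of $E(H')$, not just the padding edges. This is harmless, because those edges lie inside $H'$ and never touch a degree-$3$ vertex of $H$.

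Your fallback also works and makes Step~2 unnecessary: run the cherry walk directly on the subcubic $H$, treating only degree-$3$ vertices as ones that can become dangerous.
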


Theorem~\ref{CPd} highlights a fundamental structural advantage that Picker possesses over Breaker in sparse local games. 
To demonstrate the robust nature of this dynamic framework, we also extend our generalized offering mechanism to alternative positional game settings. Specifically, we investigate a variant of the Walker-Breaker degree game (introduced by Espig et al.~\cite{Esp}, studied in \cite{Clem, Forc1}) and establish a similar $\lfloor d/3 \rfloor$ defensive threshold for Breaker. Furthermore, we introduce the \emph{Connector-Breaker game}, similarly to London and Pluh\'ar~\cite{Lond}, where Connector (as Maker) can achieve one degree in the 3-regular graphs, but the strategy fails on $d$-regular graphs aiming $\lfloor d/3 \rfloor$ edges at every vertex. A brief exploration of the dual Picker-Chooser (Waiter-Client) configurations shows that there for Chooser (as Breaker) even the standard baseline of $\lfloor d/4 \rfloor$ is too hard, Picker may get all edges incident to a vertex in a $4$-regular graph.

Related positional frameworks have also been explored on sparse graphs recently. For instance, Forcan and Mikala\v{c}ki~\cite{Forc2} analyzed a Maker-Breaker total domination framework where players select vertices rather than edges to control dominating structures on graphs. In contrast, the ``Toucher and Isolator'' setup proposed by Dowden et al.~\cite{TI} relies exclusively on edge allocation, shifting the focus to enumerating how many vertices Isolator can completely isolate against optimal defense.

The paper is structured as follows. In Section \ref{prel}, we provide the necessary preliminaries, formal definitions, and briefly cross-reference the relevant baselines established in \cite{cube}. Section~\ref{results} formally introduces our main results, encompassing the $3$-regular case, the general $d$-regular strategy, and the extensions to Walker-Breaker, Connector-Breaker and Picker-Chooser games. Section~\ref{proofs} details the formal proofs of these theorems. Finally, Section~\ref{furt} offers concluding remarks and highlights several open problems.

\section{Preliminaries and Definitions} \label{prel}

\subsection{Origins}

Positional games on hypergraphs were formalized by Hales and Jewett \cite{HJ}, who provided the first general results in the field. Later, Erd\H{o}s and Selfridge \cite{erdos} introduced a numerical criterion for Breaker's victory, establishing the seminal potential-function method for Maker-Breaker games. While these pioneering works primarily focused on hypergraph games, a profound connection between graph games and random graph theory was observed. The degree game variant, where players compete to optimize local vertex densities, was subsequently studied under Maker-Breaker rules by Beck \cite{BB, Beck5} and Sz\'ekely \cite{Szekely}. For the sake of completeness, we recall the standard definitions from our companion paper \cite{cube}:

\begin{definition}[Degree Game]
	Let $G$ be an arbitrary simple graph. In the $(1:1)$ Maker-Breaker degree game, Maker and Breaker alternately claim previously unclaimed edges of $G$, one at a time. Maker's objective is to maximize the maximum degree of their induced subgraph, while Breaker's objective is to minimize it.
\end{definition}

As noted in \cite{cube}, it is often mathematically advantageous to quantify Breaker's defensive target explicitly by shifting the perspective to the number of edges secured at each local neighborhood:

\begin{definition}[$t$-Degree Game]
	Let $G$ be an arbitrary simple graph. In the $t$-degree game, Maker and Breaker alternately claim previously unclaimed edges of $G$. Breaker's objective is to secure at least $t$ edges at every vertex. Conversely, Maker's objective is to claim at least $\deg_G(x) - t + 1$ edges at some vertex $x \in V(G)$, where $\deg_G(x)$ denotes the degree of $x$ in $G$.
\end{definition}

To measure Breaker's optimal defensive capabilities under this formulation, we employ the parameter $\delta_B(G)$, defined as the maximum minimum degree that Breaker can guarantee in their own subgraph at the end of the Maker-Breaker degree game. When the underlying board $G$ is restricted to the class of $d$-regular topologies, a well-known folklore pairing strategy yields a universal baseline defense for Breaker:

\begin{observation}[Folklore, cf. \cite{cube, BB}] \label{euler}
	Let $G$ be an arbitrary simple $d$-regular graph. Then $\delta_B(G) \ge \lfloor d/4 \rfloor$.
\end{observation}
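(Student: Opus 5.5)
We plan to prove Observation~\ref{euler} with the classical Euler-orientation pairing strategy. First, we reduce to the case where $d$ is even and in fact $d \equiv 0 \pmod 4$ is the relevant threshold. If $d$ is odd, $G$ has an even number of vertices, but instead of a perfect matching argument it is simpler to note the following. Let $d' = 4\lfloor d/4\rfloor$. We will obtain a spanning $d'$-regular (or more generally, even-degree) structure on which the argument runs. Concretely, we pass to an auxiliary graph in which every vertex has even degree, add a new vertex joined to all odd-degree vertices if necessary, and take an Eulerian circuit in each component. Orienting the edges along the circuit gives every original vertex $x$ an out-degree of at least $\lfloor d/2\rfloor$ among genuine edges of $G$. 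Indeed, in-degree and out-degree differ by at most one once the auxiliary edge is discarded.

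Next, at each vertex $x$ we group its outgoing edges into disjoint pairs, obtaining $\lfloor \mathrm{out}(x)/2\rfloor \ge \lfloor d/4\rfloor$ pairs. Since each edge is outgoing at exactly one endpoint, every edge lies in at most one pair, so the pairs over all vertices are pairwise disjoint. Breaker's strategy is then the standard pairing response. Whenever Maker claims an edge belonging to some pair, Breaker claims the other edge of that pair. Otherwise Breaker claims an arbitrary unclaimed edge, and any such extra edge can only help.

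Finally, we count. Each pair assigned to $x$ yields at least one Breaker edge incident to $x$, because both edges of the pair are incident to $x$ as its tail. Hence Breaker ends with at least $\lfloor d/4\rfloor$ edges at every vertex. This is exactly the claim that $\delta_B(G)\ge\lfloor d/4\rfloor$.

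The only delicate point is the parity bookkeeping for odd $d$, namely checking that after the auxiliary edges are deleted the out-degree is still at least $\lfloor d/2\rfloor$ and that $\lfloor \lfloor d/2\rfloor/2\rfloor=\lfloor d/4\rfloor$. Both checks are routine.
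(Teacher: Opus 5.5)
Your proof is correct and is the same folklore argument the paper sketches: an Eulerian orientation (with an auxiliary vertex joined to every vertex when $d$ is odd), disjoint pairs of out-edges at each vertex giving $\lfloor d/4\rfloor$ pairs, and Breaker's pairing response. Two cosmetic fixes: drop the remark about a spanning $4\lfloor d/4\rfloor$-regular structure, since you never use it, and have the pairing rule also say that Breaker plays arbitrarily when the partner of Maker's edge is already his.
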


This $\lfloor d/4 \rfloor$ lower bound is tightly connected to static graph properties. Indeed, by constructing an Eulerian orientation of a $d$-regular graph (or an appropriate auxiliary expansion if $d$ is odd), one can define a rigid, non-reactive pairing strategy that forces an equitable edge allocation. This deterministic guarantee stands in sharp contrast to the bounds obtained from static graph colorings, where a traditional discrepancy approach yields a significantly higher local equity:

\begin{observation}[Folklore, cf. \cite{cube}] \label{obs:color_discrepancy}
	The edges of any $d$-regular graph $G$ can be $2$-colored such that at every vertex $v \in V(G)$, the number of incident red and blue edges is $\lfloor d/2 \rfloor \pm 1$.
\end{observation}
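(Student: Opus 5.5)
The plan is the classical Euler-tour argument: make every degree even, take an Euler circuit in each component, and colour its edges alternately red and blue.

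First reduce to even degrees. If $d$ is even, $G$ already has all degrees even. If $d$ is odd, the handshake lemma gives that $|V(G)|$ is even. Add a new vertex $z$ adjacent to every vertex of $G$. In the auxiliary multigraph $G'$ each original vertex then has degree $d+1$, which is even, and $z$ has degree $|V(G)|$, which is also even. So every component of $G'$ (or of $G$ when $d$ is even) has an Euler circuit.

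Next, in each component fix an Euler circuit $e_1e_2\dots e_m$ and colour $e_i$ red for odd $i$ and blue for even $i$. Each time the circuit passes through a vertex other than its start, it uses two consecutive edges $e_i,e_{i+1}$. These get different colours, so such a passage contributes one red and one blue edge. The only possible defect is at the start vertex $s$ of the circuit. There the pair $(e_m,e_1)$ is monochromatic exactly when $m$ is odd, and all other passages through $s$ are still balanced. Hence every vertex other than $s$ has equal red and blue degree, and $s$ has red and blue degrees differing by at most $2$.

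Now read off the bounds.
\begin{itemize}
\item If $d$ is even, each vertex has red degree $d/2$, or $d/2\pm1$ at a start vertex. This is $\lfloor d/2\rfloor\pm1$.
\item If $d$ is odd, $z$ lies in the single component of $G'$, since it is adjacent to every vertex, so we start the circuit at $z$. Then the possible imbalance falls on $z$, and every original vertex $v$ has exactly $(d+1)/2$ red and $(d+1)/2$ blue edges in $G'$. Deleting the single edge $vz$ leaves red and blue degrees at $v$ of $(d\pm1)/2$, that is, $\lfloor d/2\rfloor$ or $\lfloor d/2\rfloor+1$, which is within the claimed range.
\end{itemize}

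The only delicate point is the start vertex, and in particular making sure that the odd case never produces an imbalance of $2$ at an original vertex on top of the $\pm1$ from deleting $vz$. Choosing $z$ as the starting point of the circuit takes care of this. In the even case the $\pm1$ slack in the statement absorbs the start-vertex defect.
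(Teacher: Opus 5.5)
Your proof is correct and complete. Colouring alternately along an Euler circuit balances every vertex except the start of an odd-length circuit, which is off by exactly one in each colour. For odd $d$, joining a new vertex $z$ to all of $V(G)$ and starting the circuit at $z$ leaves every original vertex with $\lfloor d/2\rfloor$ or $\lceil d/2\rceil$ edges of each colour.

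The paper gives no proof of this observation and only cites it as folklore. Your Euler-circuit argument, with the auxiliary vertex for odd $d$, is the standard one, and it matches the paper's passing reference to Eulerian orientations with an auxiliary expansion when $d$ is odd.
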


The substantial gap between the static coloring bound of approximately $d/2$ and the game-theoretic pairing bound of $\lfloor d/4 \rfloor$ highlights the competitive advantage that the attacker possesses in dynamic selection processes. Overcoming this pairing barrier has stood as one of the most prominent challenges in the field. In his foundational monograph, Beck in his monograph \cite{BB}, conceptualized this difficulty by ranking the problem of outperforming the Eulerian pairing strategy as the first among the \emph{seven most humiliating open problems} in positional game theory.

\begin{open}[Beck \cite{BB}, cf. \cite{cube}] \label{hum}
	Can we improve the lower bound of $\lfloor d/4 \rfloor$ in the Maker-Breaker degree game to some $c \cdot d$, where $c > 1/4$?
\end{open}

\begin{conjecture} (Folklore) \label{nirvana}
	For any $d$-regular graph $G$, $\delta_B(G) \ge d/2 - o(d)$ holds.
\end{conjecture}

While our companion paper \cite{cube} focuses on breaking this $\lfloor d/4 \rfloor$ bound in Maker-Breaker games by leveraging the specific geometric and product symmetries of hypercubes, grids, and tori, the global structural restrictions of that framework leave the general case wide open. In the present paper, we shift our focus to the Chooser-Picker framework. We demonstrate that by harnessing the sequential, dynamic power of the offering mechanism, the defending player can universally improve the defense threshold from $\lfloor d/4 \rfloor$ to $\lfloor d/3 \rfloor$ on any $d$-regular graph.

\subsection{Chooser-Picker Games}
In his seminal formulation, Beck~\cite{Beck5} defined the \emph{Picker-Chooser} (P-C) and the \emph{Chooser-Picker} (C-P) versions of positional games (referring to the famous cake-cutting algorithm), that has achieved remarkable prominence in combinatorial game theory. This paradigm has been extensively investigated and developed in the last few decades.
Here the two competing entities are designated as \emph{Picker} and \emph{Chooser}. In each round, Picker selects an unselected pair of elements from the board and presents them to Chooser. Chooser retains exactly one of these elements for their own collection and returns the remaining element to Picker. If the total number of elements on the board is odd, the final remaining solitary element is automatically assigned to Chooser. In the C-P version, Chooser acts as the attacker (Maker) while Picker plays as the defender (Breaker); conversely, these adversarial roles are completely swapped in the P-C framework.

Beck observed an intriguing phenomenon in positional game theory: a winning state for Breaker in a standard M-B game and a winning state for Picker in the corresponding C-P variant appear to occur concurrently across several natural board structures. However, there are some counterexamples to this connection, see Knox~\cite{Knox}.

\begin{remark}
	We note that Chooser-Picker and Picker-Chooser games are frequently formalized in modern literature as Client-Waiter and Waiter-Client games, respectively, where Chooser takes the role of the Client and Picker acts as the Waiter. However, throughout this paper, we strictly adhere to the original terminology established by the inventor, J. Beck, out of respect for historical precedence.
\end{remark}

The operational rules for the Chooser-Picker (C-P) and Picker-Chooser (P-C) \emph{Degree Games} and \emph{$t$-Degree Games} adapt these mechanics directly to the edge set $E(G)$ of a simple graph $G$. In each turn, Picker offers a pair of previously unoccupied edges to Chooser, who claims one for $G_C$, while the other is awarded to Picker's subgraph $G_P$. In the C-P configuration, Picker plays defensively as Breaker, whereas in the P-C setup, Picker acts as Maker.

To quantify Picker's defensive strength under the primary C-P framework, we introduce a parameter analogous to the classical Maker-Breaker setting:

\begin{definition}
	For any simple graph $G$, $\delta_P(G)$ denotes the maximum minimum degree that Picker can guarantee considering exclusively the edges in $G_P$ at the end of the Chooser-Picker degree game.
\end{definition}

Naturally, when the board $G$ is a $d$-regular graph, a universal baseline defense of $\lfloor d/4 \rfloor$ applies to $\delta_P(G)$ as well. To achieve this, Picker can construct a static, non-reactive strategy by offering pairs of edges that correspond directly to the outgoing edges from each vertex in an Eulerian orientation of $G$, mirroring the classic folklore pairing utilized in Maker-Breaker games.

\subsection{Walker-Breaker and Connector-Breaker games}

To evaluate the power of our dynamic offering principles, we investigate two alternative Maker-Breaker configurations where Maker's strategic freedom is subjected to strict local or global topological constraints. 
The first variant under consideration is the \emph{Walker-Breaker} (W-B) game, which belongs to a class of constrained positional games originally introduced by Espig et al.\ \cite{Esp} and extensively analyzed in \cite{Clem, Forc1}. In this setting, the attacking player, designated as \emph{Walker}, takes the role of Maker but is subjected to a severe local movement restriction:

\begin{definition}[Walker-Breaker Game]
	Let $G$ be a simple graph. The Walker-Breaker (W-B) degree game is played on the edge set $E(G)$. Walker (acting as Maker) opens the game by claiming an initial edge $e_1$. In each turn, Walker is strictly constrained to claim an unoccupied edge $e_{i+1}$ that is incident to the terminal endpoint of their previously claimed edge $e_i$, constructing a continuous walk on $G$. Conversely, Breaker may claim any available edge on the entire board during their turn.	
	(If, at any point, Walker reaches a vertex from which no unclaimed incident edges remain, Walker is permitted to start a new walk by selecting an arbitrary unoccupied edge.)
\end{definition}

Under this framework, we prove a strong defensive theorem for Breaker, demonstrating that the exact same $\lfloor d/3 \rfloor$ defense threshold holds for W-B games as well. Specifically, Breaker can secure a minimum degree of at least $\lfloor d/3 \rfloor$ at every vertex when playing on a $d$-regular graph $G$.

The second constrained framework evaluated in this paper is the \emph{Connector-Breaker} game, which imposes a global structural constraint on the attacker's subgraph rather than a local movement rule:

\begin{definition}[Connector-Breaker Game]
	In the Connector-Breaker variant of the Maker-Breaker degree game, Connector (acting as Maker) can only take unoccupied edges that keep their graph connected, if possible. (In case of no unoccupied edges connected to their graph, Connector can choose an arbitrary edge.)
\end{definition}

While we show that this global connectivity constraint allows Breaker to guarantee a minimum degree of at least one on any $3$-regular graph, we note that this property does not scale analogously to higher degrees, establishing a sharp contrast with the universal applicability of the Chooser-Picker strategy.

\section{Results} \label{results}

In this section, we present the structural and game-theoretic bounds established across different frameworks on regular and general graphs. We begin by stating our primary results under the classical Chooser-Picker setup, showing that the dynamic, sequential offering mechanism allows Picker to universally improve the static bound of $\lfloor d/4 \rfloor$.

\subsection{Chooser-Picker Games}

Our first result focuses on $3$-regular graphs, establishing the strategic foundation for the general defense mechanism.

\begin{theorem}[Theorem~\ref{CP3} reformulated]
	For any $3$-regular graph $G$, we get $\delta_P(G) \ge 1.$
\end{theorem}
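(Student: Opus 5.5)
The plan is a dynamic strategy for Picker driven by a simple invariant on the set of vertices not yet defended. Call a vertex $v$ \emph{safe} once $G_P$ contains an edge incident to $v$, and \emph{unsafe} otherwise. For an unsafe vertex, let $f(v)$ denote the number of still unclaimed edges at $v$. Since $G$ is $3$-regular, every vertex starts unsafe with $f(v)=3$. A vertex is lost only if it is unsafe and all of its edges go to Chooser. This can be prevented as long as every unsafe vertex keeps at least two free edges, because Picker can then offer exactly those two edges. I will maintain the following invariant at the start of each Picker turn:
\begin{center}
\emph{every unsafe vertex has $f(v)=3$, except at most one vertex, called \emph{critical}, with $f(v)=2$.}
\end{center}

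The strategy has three cases.

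\begin{enumerate}
\item If a critical vertex $v$ exists, Picker offers its two free edges $vx$ and $vy$.
\item If no critical vertex exists but some unsafe vertex $u$ remains, Picker offers any two of its three free edges $ux$ and $uy$.
\item If every vertex is safe, Picker offers arbitrary pairs and the game is already won.
\end{enumerate}

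In cases 1 and 2 the offered edges are distinct and free, since $G$ is simple. After Chooser answers, the centre vertex ($v$ or $u$) becomes safe, because Picker receives one of the two offered edges. Say Picker receives $vx$ and Chooser receives $vy$.

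\begin{itemize}
\item The vertex $x$ gains a Picker edge, so it is safe regardless of its previous state.
\item The vertex $y$ loses one free edge. If $y$ was already safe, nothing changes. If $y$ was unsafe, the invariant says it had $f(y)=3$. It cannot have been critical: in case 1 the only critical vertex is the centre $v$, and in case 2 there is no critical vertex. So $y$ now has $f(y)=2$ and becomes the new unique critical vertex.
\item Any other unsafe vertex keeps all of its free edges.
\end{itemize}

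Hence the invariant is preserved. Moreover, in case 2 with $u$ unsafe, the unused third edge at $u$ does not matter, since $u$ is now safe.

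The game begins with the invariant holding vacuously. The invariant also guarantees that whenever an unsafe vertex exists, at least two free edges remain. Therefore the game can never reach the final solitary edge (which goes to Chooser when $|E(G)|=3n/2$ is odd) while an unsafe vertex is still present. Consequently the process ends only when every vertex is safe, which gives $\delta_P(G)\ge 1$.

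I expect the main obstacle to be the verification that two critical vertices can never coexist. This is precisely the step where a static pairing fails, and it is where the invariant must be checked carefully. The argument above handles it by always resolving the critical vertex immediately and observing that the single edge handed to Chooser can create at most one new critical vertex.
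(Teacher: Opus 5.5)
Your proposal is correct and is essentially the paper's argument: Picker offers two edges at a common vertex (a cherry), reacts immediately at the unique vertex that has one Chooser edge and no Picker edge (your ``critical'' vertex, the paper's ``dangerous'' vertex), and otherwise opens a new cherry. Your invariant states precisely the paper's claim that at most one dangerous vertex exists. Your write-up is slightly more careful than the paper's in three places: it centres each new cherry at an unsafe vertex, it checks that the endpoint $y$ cannot have been critical, and it handles the solitary final edge.
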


We note that under the classical M-B framework, the analogue of this statement does not hold for Breaker. Indeed, Breaker cannot guarantee a positive minimum degree for several well-known cubic graphs. However, in some specific $3$-regular cases, Breaker is capable of securing at least one edge at every vertex, with prominent examples including $Q_3, K_4$, and $K_{3,3}$ (as detailed in \cite{cube}).

The next theorem is our main result. By generalizing the dynamic sequential offering mechanism developed for the cubic base case, we scale this defensive advantage to arbitrary $d$-regular graphs. 

\begin{theorem}[Theorem~\ref{CPd} reformulated]
	Let $G$ be $d$-regular. Then $\delta_P(G) \ge \left\lfloor d/3 \right\rfloor.$
\end{theorem}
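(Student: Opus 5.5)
The plan is a dynamic "cherry" strategy. Picker only ever offers two unclaimed edges $vu, vw$ that share an endpoint $v$, called the \emph{center} of the offer. Whatever Chooser takes, $v$ gains one Picker edge. Put $t=\lfloor d/3\rfloor$. At any moment define for each vertex $x$:
\begin{itemize}
\item $r(x)$, the number of unclaimed edges at $x$;
\item $s(x)=\max\{0,\,t-\deg_{G_P}(x)\}$, the current deficit of $x$;
\item the slack $\varphi(x)=r(x)-2s(x)$.
\end{itemize}
If $\varphi(x)\ge 0$ whenever $s(x)>0$, then $x$ can later be completed by repeatedly making it a center. So the goal is to keep the invariant $\varphi(x)\ge 0$ for all deficient $x$ until every deficit is $0$. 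After that, Picker offers the remaining edges in arbitrary pairs. Initially $\varphi(x)=d-2t\ge t$, and this reserve of size about $d/3$ is exactly what the $d/3$ threshold buys compared to the static $d/4$ pairing.

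Next I would record how one offer changes the slacks.
\begin{itemize}
\item For the center $v$: $r$ drops by $2$ and $s$ drops by $1$, so $\varphi(v)$ is unchanged.
\item For the leaf whose edge goes to Picker: $\varphi$ rises by $1$ if it was deficient, and it is unaffected if it was not.
\item For the leaf whose edge goes to Chooser: $\varphi$ drops by $1$.
\end{itemize}
So only Chooser-leaf events are dangerous, and each offer causes at most one of them. The strategy is a priority rule. Choose the center $v$ among deficient vertices with minimum $\varphi$, and choose the two leaves among $v$'s unclaimed neighbours, preferring non-deficient vertices, whose slack no longer matters, and otherwise those with largest $\varphi$. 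I would also run the game in phases so that no deficient vertex is used as a leaf twice in a row without first serving as a center. A deficient vertex whose slack has dropped to $0$ immediately becomes the top-priority center, and being a center does not change its slack.

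The main obstacle is proving that the invariant survives when several critical vertices are adjacent to one another. The risk is that Chooser repeatedly takes the edge at the more fragile leaf. To handle this I would charge each Chooser-leaf event at $x$ against the reserve: a vertex $x$ loses slack only when a neighbour with no larger slack is chosen as center and $x$ is used as a leaf. Then I would show that, before $x$ reaches slack $0$, it has absorbed at most $d-2t-(\text{its own centers})$ such hits. The count $2c+\ell_P+\ell_C\le d$, where $c$ is the number of offers centered at $x$ and $\ell_P,\ell_C$ are the numbers of leaf edges at $x$ going to Picker and to Chooser, forces $c+\ell_P\ge t$ once $\ell_C\le d-3t+\ell_P$. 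I expect this last inequality to be the delicate step. I would try to secure it by a pairing of leaf events, arranging that a vertex appears as a leaf with a critical partner only when the partner is itself about to be centered.

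The small cases $d\in\{3,4,5\}$, where $t=1$, reduce to, or are handled just as, Theorem~\ref{CP3}. That argument is exactly the $t=1$ instance of this priority rule, and it guides how the ties are broken in general.
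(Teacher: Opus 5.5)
There is a genuine gap, and it is the step you yourself flag as delicate. The proposal never proves that the invariant $\varphi(x)\ge 0$ survives, and the min-slack rule does not obviously give it.

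Under that rule, a vertex $u$ that Chooser hits as a leaf is guaranteed to be the next center only if its slack has become the unique minimum. If $\varphi(u)$ was above the minimum, or ties occur, nothing stops later centers from offering $u$ again as a large-slack leaf. So $u$ can absorb several uncompensated Chooser edges, and nothing bounds $\ell_C(u)$ in terms of how often $u$ is centered. Your extra phase rule (no deficient vertex is a leaf twice without being centered in between) is not shown to be compatible with the min-slack choice. It may even be infeasible, when every remaining neighbour of the chosen center is such a vertex.

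Likewise, $\varphi(x)\ge 0$ is not a static guarantee that $x$ ``can later be completed''. Every cherry at $x$ pushes a Chooser edge onto a neighbour, which may itself be deficient with slack $0$, so critical vertices can genuinely obstruct one another under your rule. (Also, $2c+\ell_P+\ell_C\le d$ forces nothing by itself; you need equality once $x$ has no unclaimed edges.)

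The missing idea, which is the paper's, is to make Picker purely reactive to Chooser's last edge:
\begin{itemize}
\item center the next cherry at the endpoint of the edge Chooser just took, whenever that vertex still has two unclaimed edges;
\item otherwise open a new cherry anywhere;
\item once no vertex has two unclaimed edges, pair the leftover matching edges arbitrarily.
\end{itemize}
Chooser's edges then form trails, with Picker's next cherry always at the head, and the count at a vertex $x$ becomes local.

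While $r(x)\ge 3$, edges at $x$ are consumed only in three kinds of blocks:
\begin{itemize}
\item Chooser's entering edge followed immediately by a cherry at $x$ ($3$ edges, one to Picker);
\item a phase-opening cherry at $x$ ($2$ edges, one to Picker);
\item a leaf edge that goes to Picker ($1$ edge, to Picker).
\end{itemize}
A Chooser edge at $x$ outside such a block can only occur once $r(x)\le 2$. At the first such edge (if any), at least $d-2$ edges at $x$ have already been consumed in blocks, each containing one Picker edge per at most three edges. Hence $3\deg_{G_P}(x)\ge d-2$, that is, $\deg_{G_P}(x)\ge\lceil (d-2)/3\rceil=\lfloor d/3\rfloor$.

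In your notation, let $p$ be the number of pass-throughs at $x$. A deficient vertex then has slack $d-2t-p+\ell_P\ge d-3t+1>0$, because $p\le \deg_{G_P}(x)\le t-1$. So no vertex ever becomes critical, and the adjacency problem you anticipate never arises. No priority rule or tie-breaking is needed. The same count also covers $d=4,5$, which Theorem~\ref{CP3} does not literally handle.
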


A remarkable consequence of our generalized offering framework is that its strategic execution does not depend on the global regular structure of the board. Instead, the strategy operates purely on a local, node-by-node reactive basis. This property allows us to lift Theorem~\ref{CPd} to any simple graph, yielding a strong localized degree theorem:

\begin{theorem} \label{CPgen}
	Let $G = (V,E)$ be an arbitrary simple graph (not necessarily regular). In the Chooser-Picker degree game, Picker has a winning strategy to secure a degree of at least $\lfloor \deg_G(v)/3 \rfloor$ at every single vertex $v \in V$ in Picker's induced subgraph $G_P$.
\end{theorem}

Theorem~\ref{CPgen} highlights a fundamental competitive advantage that Picker possesses over Breaker in sparse local games, as a comparable universal localized result remains an elusive open challenge under the rigid, classical Maker-Breaker constraints.

Next, we evaluate the robustness of our sequential offering principles by applying them to the constrained attacker frameworks detailed in Section 2. We observe that when Maker's global or local freedom is restricted, Breaker can establish an identically powerful defense.

\subsection{Walker-Breaker and Connector-Breaker Games}
Our first constrained result demonstrates a strategic parity between the sequential dynamics of the Chooser-Picker setup and the localized step-constraints of the Walker-Breaker game:

\begin{theorem} \label{WBd}
	For any $d$-regular graph $G$, Breaker has a winning strategy in the Walker-Breaker degree game to guarantee a final degree of at least $\lfloor d/3 \rfloor$ at every vertex in Breaker's induced subgraph; that is, 
	$\deg_B(v) \ge \left\lfloor \frac{d}{3} \right\rfloor \quad \forall v \in V(G).$
\end{theorem}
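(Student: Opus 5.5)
The plan is to give Breaker a purely local reply rule, in the spirit of the node-by-node reactive strategy behind Theorem~\ref{CPgen}, and then to maintain a counting invariant at every vertex. Orient each of Walker's edges along the walk, so that $e_i=(u_i\to v_i)$ with $v_i$ the terminal endpoint. The reply rule is as follows: after Walker claims $u_i\to v_i$, Breaker claims an arbitrary unclaimed edge incident to $v_i$, if one exists. Otherwise Breaker claims an unclaimed edge at $u_i$ if one exists, and otherwise any unclaimed edge. The intuition is that Walker must leave $v_i$ along an edge incident to $v_i$. Hence every \emph{pass} of the walk through a vertex $v$, which consumes two of Walker's edges at $v$ (one in, one out), is paid for by one Breaker edge at $v$. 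This ratio of $2:1$ is exactly what yields $d/3$.

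For every vertex $v$, write $W_v$ and $B_v$ for the numbers of Walker's and Breaker's edges at $v$ at the end of the game. The target invariant is
\[
W_v \le 2B_v + 2 \qquad \text{for every } v\in V(G).
\]
This suffices, because $W_v+B_v=d$ gives $3B_v\ge d-2$, so $B_v\ge \lceil (d-2)/3\rceil = \lfloor d/3\rfloor$ for every residue of $d$ modulo $3$.

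To prove the invariant, classify Walker's edges at $v$ as arrivals ($v$ is the head) and departures ($v$ is the tail). Each arrival at $v$ is immediately answered by a Breaker edge at $v$, unless $v$ has no free edge left at that moment. In that case $v$ is saturated and its counts are frozen. A departure from $v$ that continues the walk is preceded by the arrival that brought Walker to $v$, and that arrival was answered at $v$. So arrival, Breaker reply and departure form a triple contributing $(2,1)$ to $(W_v,B_v)$. An arrival that ends a walk contributes $(1,1)$ or occurs at saturation. Therefore the only Walker edges at $v$ that are not matched in this way are: a single final arrival at saturation (slack $1$), and departures that \emph{start a new walk} at $v$ (including the very first move of the game).

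The main obstacle is controlling these walk-starts. A start $v\to y$ gives Walker an unanswered edge at $v$, since Breaker's reply goes to $y$, and a priori Walker might start many walks at the same vertex. I would first try to exploit the rule that a new walk may begin only when Walker is stuck, that is, when the current endpoint is saturated. Each restart is therefore charged to a vertex whose bookkeeping is already closed, and I would use this to argue that Walker can profitably start at $v$ at most once before $v$'s slack is exhausted.

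If this charging fails, I would modify the reply rule for start moves: answer a start $v\to y$ at the tail $v$, and carry one unit of debt at $y$. That debt is then repaid because the next departure from $y$ is followed by Breaker replying at the new head, not at $y$. The slack $+2$ in the invariant is exactly the budget available for one start and one terminal arrival per vertex. Making this precise, possibly with a potential $\Phi_v = W_v - 2B_v$ tracked throughout the game, is where I expect the real work to lie.
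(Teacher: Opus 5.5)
You have the right reduction. The invariant $W_v\le 2B_v+2$ together with $W_v+B_v=d$ gives $B_v\ge\lceil (d-2)/3\rceil=\lfloor d/3\rfloor$, and your accounting of passes is correct. The gap is the one you flag, and it is fatal, not technical. Under ``reply at the head'', a walk start $v\to y$ is never paid for at $v$, and Walker can start as many walks at $v$ as he likes.

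Take the cubic graph on $16$ vertices consisting of a vertex $v$ joined to $a,b,c$, where each of $a,b,c$ subdivides one edge of its own copy of $K_4$. For $a$, this is $K_4$ on $\{p,q,r,s\}$ with $pq$ replaced by $ap,aq$. Walker plays $v\to a$, and the play inside the gadget runs as follows.
Up to symmetry, Breaker answers with $ap$. Walker is then forced to $aq$, Breaker answers with $qr$, and Walker is forced to $qs$. Whichever of $ps,rs$ Breaker takes, Walker's forced next edge enters $r$ or $p$, whose last free edge Breaker must then take.
The gadget is now exhausted, Walker is stuck, and no Breaker edge touches $v$. Walker restarts with $v\to b$, repeats, and finally claims $vc$. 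So $B_v=0<\lfloor 3/3\rfloor$, whatever tie-breaking Breaker uses, and the claim that Walker can profitably start at $v$ at most once is false.

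Your fallback fails on the same graph. If starts are answered at the tail, Walker opens with $a\to v$, continues $v\to b$ (answered at $b$), is trapped in the $b$-gadget as above, and restarts with $c\to v$, again leaving $B_v=0$. An unanswered arrival at $y$ followed by a departure from $y$ is a $(2,0)$ pass at $y$. Replying at the next head does not repay that debt; it leaves it in place.

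The missing idea, which is exactly the paper's strategy, is to reply at the \emph{tail}. After every Walker edge $u\to w$, including the first edge of each new walk, Breaker claims a free edge at $u$, the vertex Walker has just left, or an arbitrary edge if there is none. This gives two facts at each vertex $v$.
First, every departure from $v$, whether a continuation or a walk start, is answered at $v$ unless it is the last edge at $v$.
Second, every arrival at $v$ is followed by a departure from $v$ unless $v$ is saturated by Walker's next turn. The intervening Breaker reply is at the tail of the arriving edge, so in a simple graph it cannot use an edge at $v$ unless it is a fallback move, which only helps.

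Write $D_v$ for the number of departures from $v$. Then $W_v\le 2D_v+\eta_v$ and $B_v\ge D_v-\epsilon_v$ with $\eta_v,\epsilon_v\in\{0,1\}$. Moreover $\eta_v+\epsilon_v\le 1$: an unmatched arrival forces the last edge at $v$ to be that arrival or the Breaker move right after it, while an unanswered departure forces it to be that departure, and these are distinct edges. Hence $W_v\le 2B_v+2$, your arithmetic finishes the proof, and walk starts are no longer special.
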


\begin{theorem} \label{WBgen}
	Let $G$ be an arbitrary simple graph (not necessarily regular). In the Walker-Breaker degree game, Breaker has a winning strategy to secure a degree of at least $\lfloor \deg_G(v)/3 \rfloor$ at every single vertex $v \in V$ in Breaker's induced subgraph.
\end{theorem}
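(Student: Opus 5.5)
The plan is to give Breaker a purely local, reactive strategy that forces every vertex $v$ to satisfy, at the end of the game,
\[
w(v) \le 2\,b(v) + 2,
\]
where $w(v)$ and $b(v)$ denote the numbers of Walker's and Breaker's edges at $v$. Since $w(v)+b(v)=\deg_G(v)$, this inequality gives $b(v)\ge (\deg_G(v)-2)/3$. Checking the three residues of $\deg_G(v)$ modulo $3$, this is equivalent to $b(v)\ge\lfloor \deg_G(v)/3\rfloor$. Regularity is never used, so Theorem~\ref{WBd} follows as a special case. The strategy mirrors the intuition behind Theorem~\ref{CPgen}: Walker's movement rule plays the role of the offering mechanism. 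Every time Walker passes through $v$, he spends two edges at $v$, and Breaker gets one answer there. This ratio of $2:1$ is exactly the $d/3$ ratio.

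\textbf{Strategy.} Whenever Walker claims an edge $uv$ and thereby arrives at $v$ (so his next edge must start at $v$), Breaker claims an arbitrary unclaimed edge at $v$, if one exists. If none exists, then $v$ is already finished and Breaker plays anywhere, preferably at a vertex with the currently largest deficit $w-2b$.

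\textbf{Accounting.} I then classify Walker's edges at a fixed vertex $v$.
\begin{itemize}
\item An \emph{arrival} at $v$ is answered immediately by a Breaker edge at $v$, unless $v$ is exhausted.
\item A \emph{departure} from $v$ is always preceded by the arrival that brought Walker to $v$.
\end{itemize}
Hence each pass through $v$ contributes $2$ to $w(v)$ and $1$ to $b(v)$, and so keeps $w-2b$ unchanged. The exhausted case can occur only when the last free edge at $v$ is used, and it costs at most a bounded additive amount. Edges that Breaker claims at $v$ while answering elsewhere only help $v$.

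\textbf{Main obstacle: restarts.} The difficulty is the restart rule. When Walker is stuck, he may claim an arbitrary edge $xy$ and continue from either endpoint. The non-terminal endpoint then receives a Walker edge with no prior arrival. If Breaker guesses the terminal endpoint wrongly, one vertex may accumulate deficit $2$ in a single restart. To control this, I will first try the following rule: Breaker answers a restart edge $xy$ at whichever endpoint currently has the larger value of $w-2b$. I then aim to prove a lemma stating that, apart from the exhausting moves, each vertex is charged at most one unanswered walk endpoint in total.

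The heuristic behind the lemma is this. Walker becomes stuck at $u$ only when $u$ has no free edges left, and a stuck vertex is never revisited. Likewise, a vertex that has served as the start of a walk already has an unanswered departure and should be protected by Breaker's choice at the next restart touching it. If this charging argument closes with total additive slack $2$ per vertex, the invariant $w(v)\le 2b(v)+2$ follows and the theorem is proved. If it does not close, the fallback is to strengthen the invariant during play to $w(v)\le 2b(v)+1$ for all unfinished vertices, and to use the restart answer to restore it.
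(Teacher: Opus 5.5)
\textbf{There is a genuine gap.} Your reduction to the inequality $w(v)\le 2b(v)+2$ is correct. However, the strategy you chose cannot deliver it, and the restart lemma you leave open is false for that strategy.

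\emph{Why walk starts are the problem.} Answering at the \emph{head} of each Walker edge (the vertex just entered) leaves the first edge of every walk unanswered at its starting vertex. A vertex may start arbitrarily many walks, because Walker can end a walk whenever he likes: he enters a vertex with exactly two free edges, and your answer there takes the last one. Under the plain head rule, if every neighbour of $v$ has degree $2$, Walker restarts with $v\to x_1$, $v\to x_2,\dots$ in turn and takes every edge at $v$.

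\emph{Why the larger-deficit rule does not help.} It only relocates the loss. If a restart $x\to y$ is answered at $x$, then $y$ receives an unanswered arrival followed by a forced departure, i.e.\ deficit $2$. Walker can later spend $y$'s last edge on another restart.

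\emph{Concrete counterexample.} Let $x$ be adjacent to $a$, $y$ and a leaf; $a$ adjacent to $a_1,a_2$; $y$ adjacent to $z$ and a leaf $u$; and let $a_1,a_2,z$ each have one further pendant neighbour. Walker opens with $x\to a$.
\begin{itemize}
\item If Breaker answers at $x$: Walker plays $a\to a_1$, Breaker's forced answer exhausts $a_1$, and Walker restarts with $aa_2$. So $a$ (degree $3$) gets no Breaker edge.
\item If Breaker answers at $a$: Walker leaves $a$ towards the other $a_i$, which the next answer exhausts. Walker restarts with $x\to y$; now $x$ has strictly larger deficit, so Breaker answers at $x$. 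Walker plays $y\to z$ (Breaker exhausts $z$) and restarts with $yu$. So $y$ (degree $3$) gets no Breaker edge.
\end{itemize}
Your fallback invariant $w\le 2b+1$ on unfinished vertices fails in the same play. Whichever endpoint of $x\to y$ Breaker answers, one of $x,y$ reaches deficit $2$ while still having a free edge.

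\textbf{The missing idea is to answer at the \emph{tail}.} After Walker claims $u_iu_{i+1}$, Breaker claims a free edge at $u_i$. For the first edge of a walk this is the starting vertex $u_1$, which Walker designates in the paper's model, so no guessing is involved. This is exactly the paper's strategy of responding at $v_i$ right after Walker departs from it.

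With this rule:
\begin{itemize}
\item Every Walker out-edge at $v$ is answered at $v$, except possibly one that exhausts $v$.
\item In-edges at $v$ exceed out-edges by at most one. A walk can end at $v$ only when $v$ has no free edge left, which happens at most once.
\item The unbounded boundary events (walk starts) are now the answered ones.
\end{itemize}
Let $o,i$ be the numbers of Walker's out- and in-edges at $v$. Either $b\ge o\ge i-1$, or the last out-edge is unanswered, in which case $b\ge o-1$ and $i\le o$. In both cases $w=o+i\le 2b+2$, and no restart lemma is needed.
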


On the other hand, restricting the attacker via a global connectivity rule yields a specialized defense on low-order regular graphs that does not scale analogously to higher degrees of regular graphs:

\begin{observation} \label{conn}
	In the Connector-Breaker degree game played on any $3$-regular graph $G$, Breaker has a winning strategy to ensure that no vertex remains isolated in Breaker's subgraph, yielding $\deg_B(v) \ge 1$ for all $v \in V(G)$.
\end{observation}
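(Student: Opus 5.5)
The plan is to give Breaker an explicit reactive strategy, taking Connector to move first, and to maintain a simple invariant through the whole game. Call a vertex \emph{protected} if Breaker already owns an edge at it, \emph{touched} if Connector owns an edge at it, and call an edge \emph{free} if it is unclaimed. The invariant, checked after each of Breaker's moves, is:
\begin{center}
(I) every touched vertex is protected, except possibly one vertex $p$; this $p$ has Connector-degree exactly $1$ and exactly two free edges.
\end{center}
If (I) holds when the board is exhausted, no vertex can be exceptional, because the exceptional vertex would still have free edges. Hence every touched vertex is protected. An untouched vertex has all three of its edges owned by Breaker, so it is protected too. This proves the observation.

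One fact is used repeatedly. Suppose Connector's move touches a vertex $z$ for the first time and $z$ is unprotected. Then the other two edges at $z$ are neither Connector's nor Breaker's, so $z$ has Connector-degree $1$ and exactly two free edges. In other words, every newly created unprotected vertex looks exactly like the exceptional vertex $p$ in (I).

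The main step is the case analysis for Connector's move $e$, starting from a position satisfying (I).
\begin{itemize}
\item[(a)] \emph{$e$ is incident to $p$.} Now $p$ has one free edge left, and Breaker takes it. The other endpoint $x$ of $e$ is either already protected, or it is a newly touched unprotected vertex with two free edges. In the second case $x$ becomes the new $p$.
\item[(b)] \emph{$e$ avoids $p$ and creates a new unprotected endpoint $z$.} Breaker claims a free edge at $z$. If the edge $zp$ exists and is free, Breaker takes that one, which protects both $z$ and $p$. Otherwise $p$ stays as the exceptional vertex, unchanged.
\item[(c)] \emph{$e$ creates no new unprotected vertex.} Breaker claims one of the free edges at $p$, if $p$ exists; otherwise Breaker plays arbitrarily.
\end{itemize}

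The point needing the most care is a move where both endpoints of $e$ are new unprotected vertices. This happens at Connector's first move, and also when Connector is allowed to start afresh with an arbitrary edge. In the restart situation I claim that no $p$ exists. Indeed, an exceptional $p$ is touched, so it lies in Connector's graph, and its two free edges would be legal connected moves for Connector; a restart is permitted only when no such moves exist. So at most two unprotected vertices $u,v$ are present, both new. Breaker claims a free edge at $u$, other than $uv$ itself, which is now Connector's. By the fact above, $v$ then satisfies the conditions for the exceptional vertex, and (I) holds again.

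The obstacle I expect is ensuring that two unprotected vertices of Connector-degree $2$ never appear simultaneously, since Breaker could save only one of them. This is exactly what (I) rules out. A vertex reaches Connector-degree $2$ while unprotected only through case (a). There the only candidate is $p$, which Breaker repairs at once, while the fresh endpoint $x$ still has Connector-degree $1$. I would also check that the connectivity rule is used only in the restart argument. Everything else goes through for any Connector, which explains why the argument does not extend to degree $\lfloor d/3\rfloor$ for larger $d$: there, several vertices can become critical simultaneously.
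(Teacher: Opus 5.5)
Your proof is correct. It rests on the same invariant as the paper's argument: there is at most one dangerous vertex, and it carries one Connector edge and two free edges. Your responses in the main line also match the paper's: Breaker answers the opening by taking an edge at one endpoint, and answers a move out of the dangerous vertex by taking its last free edge.

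The difference lies in how Connector's other options are treated. The paper asserts that connectivity forces Connector to extend from the dangerous vertex, and it dismisses moves touching safe vertices as ``strictly suboptimal'' without justification. Such moves are in fact available: right after the opening, Connector may take the remaining free edge at $v_1$, which creates a second fresh unprotected vertex. Your cases (b) and (c) show that Breaker restores (I) after any such move. As a result, the connectivity rule is needed only at the one point you isolate: ruling out an edge with two fresh unprotected endpoints while $p$ exists. This makes your version a proof against every legal Connector strategy, not just the expected one.

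One small loose end remains. $|E(G)|=3n/2$ is odd when $n\equiv 2 \pmod 4$ (e.g.\ $K_{3,3}$ or the Petersen graph), so the board can be exhausted by a Connector move, after which you never asserted (I). The fix is short. Just before that last move only one edge is free, so by (I) no $p$ exists. Each endpoint of the last edge is then already protected: a touched endpoint by (I), and an untouched one because its other two edges belong to Breaker.
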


\subsection{Picker-Chooser Games}
Finally, we turn our attention to the dual Picker-Chooser framework. As noted in Section~\ref{prel}, the scoring inversion completely transforms the underlying strategic layout: Picker now acts as Maker attempting to minimize the minimum degree in Chooser's induced subgraph. We characterize a stark topological contrast in this framework between different families of sparse graphs. 

\begin{observation} \label{PCP}
	Picker (acting as Maker) has a winning strategy to isolate at least one vertex in Chooser's subgraph when the Picker-Chooser game is played on the $3$ or $4$-dimensional hypercube graph $Q_3$ and $Q_4$ or on the Petersen graph.
\end{observation}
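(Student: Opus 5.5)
We would prove Observation~\ref{PCP} by reformulating Picker's goal. In the P-C game Picker receives every edge not claimed by Chooser. Isolating a vertex $v$ in Chooser's subgraph therefore means that Chooser's final edge set $G_C$ is \emph{not} an edge cover of $G$. Call a vertex \emph{dead} once Chooser has claimed an edge at it, and \emph{live} otherwise. Chooser wins exactly when every vertex is dead at the end. Picker's task is to choose the pairs so that Chooser kills vertices inefficiently.

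The key device is the offer type. Picker offers two edges $e=xy$ and $f=xz$ sharing an already dead vertex $x$. Whichever edge Chooser takes, at most one new vertex dies. Offering two edges that both lie inside the dead set kills nothing, and Picker can use such moves to dispose of edges without cost. The first move kills at most two vertices if Picker offers two edges at a common vertex. Chooser claims exactly $\lfloor |E|/2\rfloor$ edges plus possibly a final leftover edge, so the number of killed vertices is at most $2+(\text{number of offers with a live endpoint}) +(\text{leftover})$.

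For each board we would count this budget against $|V|$. For $Q_3$ there are $6$ rounds and $8$ vertices. For $Q_4$ there are $16$ rounds and $16$ vertices. For the Petersen graph there are $7$ rounds, $10$ vertices, and one leftover edge that goes to Chooser. In every case the naive bound is not quite enough. Picker must also arrange that a constant fraction of the rounds are ``free'' rounds inside the dead set, or that some rounds offer two edges at the same live vertex; such a round also kills only one vertex, but it does not expose a second live vertex.

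The real constraint comes at the end of the game. Every edge at the surviving target vertex $v$ must eventually be offered. Each such edge must be paired with an alternative that Chooser prefers, or that is no worse for her, and Chooser will always take the edge at $v$ if that is her only way to finish the cover. So the plan is to fix a target region early. For $Q_3$ this could be a vertex $v$ together with its antipode. Picker then ensures that, whenever an edge at $v$ is offered, its partner edge is the last edge touching some other live vertex $w$. Chooser then faces a genuine dilemma between killing $v$ and killing $w$, and Picker keeps two candidate targets alive until the final round.

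Concretely, we would write down an explicit move sequence for $Q_3$ with a short case tree over Chooser's binary choices; there are at most $2^6$ branches, and symmetry collapses them. We would do the same for the Petersen graph, where vertex-transitivity helps, and extend the $Q_3$ strategy to $Q_4=Q_3\times K_2$ by playing on the two copies and using the matching edges as free or dilemma offers. We expect the main obstacle to be this endgame dilemma construction: keeping two live candidates with pairwise disjoint pending edges so that Chooser cannot kill both. If a clean hand argument fails for $Q_4$, the fallback is an exhaustive computer search of the game tree, pruned by the dead/live abstraction and graph automorphisms.
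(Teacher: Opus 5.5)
\textbf{There is a genuine gap: the proposal never gives a strategy for any of the three boards, and the framework it sets up does not produce one.}

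Your dead/live count bounds Chooser's progress only if, after the first round, every offered edge has a dead endpoint. You never show that Picker can keep making such offers. Chooser may take edges with both ends dead, or steer the dead set so that this supply dries up, after which Picker must offer an edge with two live ends. That, not the arithmetic, is the obstacle: on $Q_3$ the bound $2+5=7<8$ would already suffice if the supply never ran out.

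It can run out. Take cherries at $000$, $100$, $110$, answered by $\{000,100\}$, $\{100,110\}$, $\{110,010\}$. Only two free edges then touch the dead set. Chooser answers the forced offers with $\{000,001\}$ and $\{001,101\}$. The last pair is then $\{101,111\},\{011,111\}$ with $011$ and $111$ both live, so Chooser covers every vertex. Two smaller errors: two edges offered at a \emph{live} vertex can kill two vertices, not one, and so can the Petersen leftover edge.

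Your endgame dilemma is the right finishing move, but as stated it is circular. Pairing an edge at $v$ with the last edge of a live $w$ forces Chooser only if all other edges at $w$ are already Picker's. Once Picker has two such near-complete vertices with distinct last edges, it wins at once by offering those two edges together. So the real content of the proof is how to create near-complete vertices against every Chooser response. That is exactly what you defer to ``we would write down a move sequence'' or to a computer search.

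\textbf{The missing idea, which is how the paper argues, is to fix no target in advance.} Instead, use pairings in which \emph{every} Chooser answer leaves Picker a useful configuration.

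For $Q_3$, index the four edges of one direction by $\{0,1\}^2$. Offer the two with complementary indices, then the other two. Any edge kept from the first pair and any edge kept from the second differ in exactly one index coordinate, so Picker owns two parallel edges of a common $2$-face. Offering the remaining two edges of that face gives Picker a path $p_1p_2p_3p_4$. Its interior vertices carry no Chooser edge and exactly one free edge each, namely their third-direction edges, which are distinct. Offering those two edges as a pair isolates $p_2$ or $p_3$ in $G_C$.

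For $Q_4$, run these three rounds in both $Q_3$ layers. This gives $A,B$ and $C,D$, each with two Picker edges and two free edges. Finish by a case split on whether $0$, $1$ or $2$ matching edges join $\{A,B\}$ to $\{C,D\}$. In each case, use forks to raise two vertices to three Picker edges with distinct last free edges, then fork on those.

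For the Petersen graph, offer two disjoint edges, then one edge at each endpoint of Picker's edge. This produces a near-complete vertex in two rounds. Doing this again away from it and then forking finishes. No global counting or computer search is needed.
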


As we can see, even the classical $\lfloor d/4 \rfloor$ Eulerian pairing strategy does not carry over to Picker-Chooser configurations, as Picker can overcome this threshold on the $4$-dimensional (and thus, $4$-regular) hypercube.

\begin{observation} \label{PCC}
	Chooser (acting as Breaker) has a winning strategy to defend all vertices, ensuring a minimum degree of at least one in Chooser's subgraph, when the game is played on $K_4$ or on $K_{3,3}$.
\end{observation}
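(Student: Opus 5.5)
The proof is a finite case analysis. Both boards are small: $K_4$ has $6$ edges, so there are $3$ offers, and $K_{3,3}$ has $9$ edges, so there are $4$ offers and the last edge goes to Chooser automatically. I would give Chooser an explicit reactive rule and check that Picker cannot beat it. Call a vertex \emph{uncovered} if it has no Chooser edge yet. Call an uncovered vertex \emph{critical} if exactly one unclaimed edge at it remains.

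Chooser's rule is as follows.
\begin{enumerate}
\item If an offered edge is the last unclaimed edge at some uncovered vertex, take it.
\item Otherwise, take the offered edge covering more uncovered vertices.
\item If there is still a tie, take the edge whose endpoints have fewer remaining unclaimed edges.
\end{enumerate}

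The invariant to maintain is that after each of Chooser's moves, no uncovered vertex has all its edges claimed. In addition, no single pair that Picker can still offer consists of the last edges of two different uncovered vertices, unless both are the same edge. This double-threat configuration is the only way Chooser can lose. The reason is that a vertex ends up isolated in $G_C$ only if its last unclaimed edge goes to Picker. That happens only when that edge is offered together with another edge Chooser is forced to prefer.

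For $K_4$ I would argue directly. The first edge Chooser takes, say $ab$, covers two vertices. In the second round, if $cd$ is offered, Chooser takes it and is done. Otherwise every remaining edge except $cd$ joins $\{a,b\}$ to $\{c,d\}$. Chooser takes the offered edge whose uncovered endpoint, say $c$, has fewer remaining edges. One then checks the two remaining edges. If the last pair contains an edge at $d$, Chooser takes it. If it does not, then both $ad$ and $bd$ were already used, so one of them was offered in round $2$. The tie-break then makes the situation symmetric: Chooser could have chosen the edge at $d$ instead, so this case is avoided by refining the rule in round $2$ to \emph{prefer the endpoint whose remaining edges are about to be exhausted}. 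This is a two- or three-line check.

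For $K_{3,3}$ every vertex has degree $3$, and Chooser ends with $5$ edges, so covering all $6$ vertices requires a spanning forest with no isolated vertices, for example a perfect matching plus any edge. The main obstacle is ruling out Picker building a double threat on two uncovered vertices $u,v$ whose last edges are distinct and offered together. My first attempt would be a counting argument. An uncovered vertex loses an unclaimed edge only when one of its edges is offered. By the greedy rule, whenever an edge at an uncovered vertex is offered against an edge not touching any uncovered vertex, Chooser covers that vertex. So a vertex can drop from $3$ to $1$ remaining edges while still uncovered only if Picker spends offers pairing its edges against other edges that cover uncovered vertices. With only $4$ offers, I would show Picker can create at most one critical vertex before the greedy rule has covered at least $4$ vertices. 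Then a single critical vertex is handled by rule 1, and the final automatic edge handles the leftover.

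If this counting proves delicate, the fallback is to use symmetry of $K_{3,3}$ to reduce the first two offers to a handful of isomorphism types and verify each by hand or by computer.
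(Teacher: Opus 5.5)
There is a genuine gap in the $K_{3,3}$ part. Your reduction to double threats is correct, and the $K_4$ case is fine. It can be said more simply: Chooser loses on $K_4$ iff Chooser's three edges form a triangle, and before the last offer Chooser's two edges close at most one triangle. The paper gives no argument here at all (it calls the cases finite game trees left to the reader), so your fallback of an exhaustive check matches its level. But as written, your $K_{3,3}$ part is not a proof, and the lemma you plan to prove would not suffice.

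Take parts $\{a_1,a_2,a_3\}$, $\{b_1,b_2,b_3\}$ and offers $\{a_1b_1,a_2b_2\}$, $\{a_1b_2,a_2b_3\}$, $\{a_2b_1,a_3b_2\}$, with Chooser taking the second edge each time. Each of these offers is a tie under your rules 1 and 2. There is at most one critical vertex while fewer than four vertices are covered. Yet afterwards $a_1$ and $b_1$ are both critical, with distinct last edges $a_1b_3$ and $a_3b_1$, and Picker offers these together. So ``at most one critical vertex before four vertices are covered'' is compatible with a loss: the double threat appears exactly when the fourth vertex gets covered.

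What actually has to be excluded is the following. Two critical vertices need at least three Picker edges, so they must share a Picker edge $uw$. Hence a double threat can only arise at the fourth offer, with Picker's first three edges forming a path $xu,uw,wy$ and Chooser's first three edges lying inside the $4$-cycle $K_{3,3}-\{u,w\}$. Rule 2 then forces $uw$ to be offered first. Up to symmetry, it also forces the second offer to be $\{xu,c_2\}$ with $x\in c_1$ and $c_2$ adjacent to $c_1$, which rules 1--2 leave tied.

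So everything hinges on rule 3, which you leave ambiguous. Summing remaining edges over both endpoints gives $4<5$, and Chooser takes $xu$. Taking the minimum instead gives $2=2$ (and $1=1$ at the third offer), and the line above goes through. You must pin rule 3 down and carry out this argument.

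A cleaner fix that handles both graphs at once is the Erd\H{o}s--Selfridge potential.
\begin{itemize}
\item Let $r(v)$ be the number of unclaimed edges at $v$, and set $\Phi=\sum 2^{-r(v)}$ over vertices not yet covered by Chooser.
\item Chooser takes the offered edge maximizing $\phi(e)=\sum 2^{-r(v)}$, summed over the uncovered endpoints $v$ of $e$.
\item Taking $e$ removes $\phi(e)$ from $\Phi$, and giving $f$ to Picker adds at most $\phi(f)$. So $\Phi$ never increases, and the final solitary edge only lowers it.
\item A vertex isolated in $G_C$ would contribute $1$ at the end, whereas $\Phi_0=4/8$ for $K_4$ and $6/8$ for $K_{3,3}$.
\end{itemize}
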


Following this comprehensive overview of the threshold bounds, all relevant proofs and explicit sequential strategies are presented in detail in Section \ref{proofs}.

\section{Proofs} \label{proofs}
\subsection{Proof of the Chooser-Picker 3-Regular Base Case}

We first provide the formal proof of Theorem~\ref{CP3} by presenting an explicit sequential offering strategy for Picker on an arbitrary $3$-regular graph $G$. To formalize the dynamic behavior of this strategy, we introduce the geometric concept of a \emph{cherry}.

\begin{definition}[Cherry]
	A cherry consists of a pair of adjacent edges sharing a common vertex. This common vertex is called the \emph{center} of the cherry, while the two remaining distinct vertices of degree one are referred to as the \emph{endpoints} of the cherry.
\end{definition}

Picker's global objective is to ensure that by the end of the game, every vertex is incident to at least one edge in Picker's induced subgraph $G_P$. During the game, we classify the status of the vertices based on the edges claimed so far. A vertex $v \in V(G)$ is defined as \emph{safe} if it is already incident to at least one edge in $G_P$. Conversely, a vertex $v$ is defined as \emph{dangerous} if it is incident to exactly one edge in Chooser's subgraph $G_C$ and is not incident to any edges in $G_P$. A dangerous vertex poses an immediate structural threat because if Chooser manages to claim all of its remaining incident edges, Picker will fail to defend it.
Picker's winning strategy is executed via a dynamic, reactive cherry-offering mechanism as follows:

\noindent\textbf{Initial Move:} Picker selects an arbitrary vertex $v_0 \in V(G)$ and offers a cherry centered at $v_0$, consisting of two of its incident unclaimed edges. Chooser claims one of these edges for $G_C$, while the other edge is allocated to $G_P$. Consequently, the endpoints of Picker's edge become immediately safe. However, the endpoint of Chooser's edge, say $v_1$, now has one incident edge in $G_C$ and none in $G_P$. Since $G$ is $3$-regular, $v_1$ is left with exactly two unoccupied edges, making it the unique dangerous vertex on the board.

\noindent\textbf{Subsequent Reactive Moves:} In each turn, if a unique dangerous vertex $v$ exists, Picker is forced to offer the cherry centered precisely at $v$, utilizing its remaining two unoccupied incident edges. Following this offer, Chooser must claim one edge for $G_C$, leaving the other for $G_P$. This yields two crucial structural consequences:
\begin{enumerate}
	\item The previous dangerous vertex $v$ is successfully resolved and becomes safe, as it now receives exactly one edge in $G_P$ (alongside two edges in $G_C$).
	\item Chooser's new edge extends into a new vertex, say $w$. If $w$ was previously unvisited (and thus had a local degree of zero in both subgraphs), it now receives its first incident edge in $G_C$. Since $w$ has two remaining unoccupied edges, it becomes the new unique dangerous vertex for the next round.
\end{enumerate}

We note that at the end of any such turn, at most one new dangerous vertex can emerge. This is because Chooser's choice is restricted to the two edges offered within the cherry, and the endpoint of Picker's edge is automatically rendered safe.

If Chooser's new edge terminates at a vertex $w$ that is already safe (meaning it already possesses at least one incident edge in $G_P$ from an earlier stage of the game), then no dangerous vertices remain on the board. In this scenario, Picker gains strategic freedom and can initiate the next round by offering an arbitrary cherry centered at any remaining vertex. This process continues until all vertices are safe.

To highlight the necessity of this localized strategy, we observe that offering independent edges is highly suboptimal. If Picker were to offer two independent edges spanning four distinct vertices, Chooser's choice would simultaneously create two new dangerous vertices (the two endpoints of Chooser's claimed edge). If Chooser could subsequently claim a path connecting these two dangerous points, they could successfully isolate a neighborhood and win. By strictly offering adjacent edges in the form of a cherry, Picker guarantees that at most one dangerous vertex exists at any given stage, which can be immediately neutralized in the subsequent turn. This completes the proof of Theorem~\ref{CPd}. \qed

\begin{figure}[htbp]
	\centering
	\includegraphics[scale=0.46]{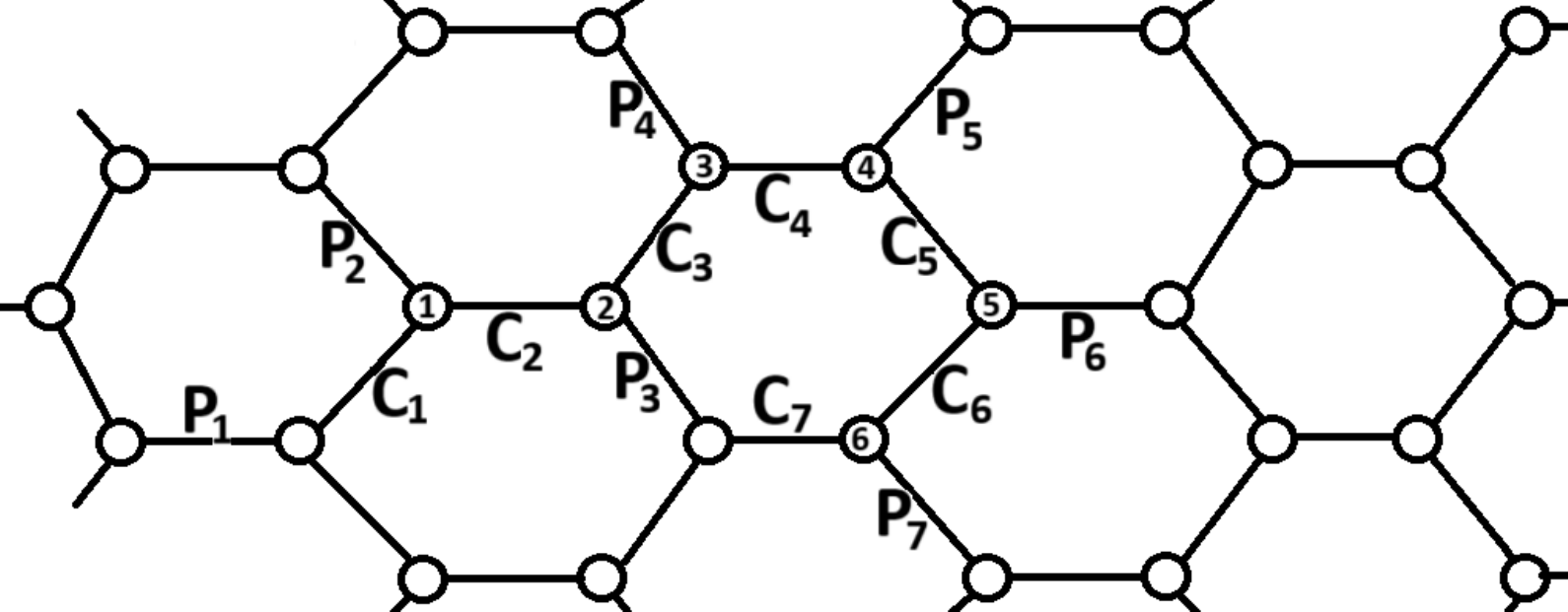}
	\caption {A Chooser-Picker game}
	\label{3re}
\end{figure}

Figure~\ref{3re} illustrates an illustrative example of a C-P game executed via the dynamic offering strategy described above. Here, $P_i$ and $C_i$ denote the $i$-th move of Picker and Chooser, while the numbers labeled inside the vertices indicate the location of the unique dangerous vertex directly after the $i$-th turn. 
Note that following the $7$-th move in the diagram, Chooser's claimed edge terminates at an already safe vertex. Consequently, no dangerous vertices remain after this step, allowing Picker to freely initiate a new offering phase by presenting a cherry at an entirely new location. As a structural consequence of this mechanism, the edges claimed by Chooser form a continuous walk that originates at their initial selection and terminates at their final edge right before Picker initiates a new phase of cherries from an unvisited vertex.

\subsection{Proof of the Chooser-Picker General Degree Theorem}
We now present the proof of Theorem~\ref{CPd} and its universal extension, Theorem~\ref{CPgen}. The proof establishes that the localized, reactive cherry-offering mechanism developed for the cubic base case can be scaled directly to any graph.

\begin{proof}[Proof of Theorem~\ref{CPd} and Theorem~\ref{CPgen}]
	Let $G = (V, E)$ be an arbitrary simple graph, where the local degree of any vertex $v \in V$ is denoted by $\deg_G(v)$. We describe a generalized, sequential cherry-offering strategy for Picker that ensures Picker secures at least $\lfloor \deg_G(v)/3 \rfloor$ edges incident to every vertex $v$.
	
	The game is executed via successive phases of the reactive cherry-offering walk described in Section 4.1. Picker maintains the global count of active edges. A vertex $v$ is designated as \emph{dangerous} if it has been entered by a Chooser-claimed edge but has not yet been neutralized by a Picker-claimed edge during the current local round. The game proceeds as follows:
	\begin{enumerate}
		\item Picker initiates a phase by selecting an arbitrary vertex $v_0$ that possesses at least two adjacent unoccupied edges, offering a cherry centered at $v_0$.
		\item As long as a dangerous vertex $v$ exists and possesses at least two remaining unoccupied incident edges, Picker immediately reacts by offering a cherry randomly, centered precisely at $v$.
		\item If the walk terminates because Chooser's new edge lands on a vertex $w$ that has no two unoccupied edges, Picker regains strategic freedom and can initiate a new phase from any available vertex with a degree of at least two.
	\end{enumerate}

	Let us analyze the local edge accounting at an arbitrary vertex $v \in V$ during this process. Each time the sequential offering mechanism visits and transitions through $v$ (i.e., when $v$ acts as the center of an offered cherry), the local neighborhood of $v$ loses exactly three unoccupied edges from the remaining board:
	Exactly one edge is claimed by Chooser to enter the vertex $v$ in the preceding step. And exactly two edges are offered by Picker as a cherry centered at $v$, of which Chooser claims one and Picker receives the other.
	Consequently, in every complete local transition where $v$ acts as the center of the cherry, the available degree of $v$ decreases by exactly three, out of which Chooser accumulates exactly two edges, and Picker secures exactly one edge. 
	
	We note that during the game, Picker may also acquire edges incident to $v$ where $v$ acts merely as an endpoint of a cherry centered at some neighboring vertex. These additional edges do not negatively impact the strategic lower bound; instead, they strictly increase the final degree in Picker's induced subgraph $G_P$.
	
	Therefore, if $v$ serves as the center of an offered cherry for a total of $m_v$ times, Chooser gets at most $2m_v$ edges. Because a vertex can only cease to be a valid cherry center when its remaining unoccupied degree drops below two, and each complete transition consumes three incident edges, we get the following: the total number of complete transitions that can be forced through $v$ is at most $\lfloor \deg_G(v)/3 \rfloor$. 
	
	Even if Chooser claims all of the remaining leftover edges at the end of the game when no further cherries can be centered at $v$, Picker's required quota is already safely accumulated. At the end of the game, the final degree secured by Picker at any vertex $v \in V$ satisfies:
	\[ \deg_{G_P}(v) \ge \left\lfloor \frac{\deg_G(v)}{3} \right\rfloor. \]
	If $G$ is a $d$-regular graph, $\deg_G(v) = d$ for all $v \in V$, which yields $\delta_P(G) \ge \lfloor d/3 \rfloor$, simultaneously completing the proofs of Theorem~\ref{CPd} and Theorem~\ref{CPgen}.
\end{proof}

We conclude this section by showing that the dynamic cherry-offering framework is inherently bounded by this threshold, regardless of how Picker optimizes the choice of the offered cherry from the current dangerous center.

\begin{observation} \label{CPtight}
	Using the algorithm described above, the guaranteed minimum degree secured by Picker cannot be improved by employing any deterministic rule for cherry selection from the dangerous vertex instead of choosing an arbitrary one.
\end{observation}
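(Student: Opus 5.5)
The statement is a tightness claim about the reactive cherry-offering algorithm. The plan is to show that, whatever deterministic rule Picker uses to choose the two offered edges at a dangerous center, Chooser has a counter-strategy that holds some vertex $v$ of a suitable $d$-regular graph to exactly $\lfloor d/3 \rfloor$ Picker-edges. Since the lower bound of Theorem~\ref{CPgen} is already established, only the matching upper bound (for this algorithm) needs a construction. I will fix a target vertex $v$ and describe Chooser's replies, using only two features of the algorithm: every cherry centered at $v$ has its entering edge and the non-taken offered edge go to Chooser, and Picker gains edges at $v$ only through cherries centered at $v$ or at neighbours.

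The first step is to neutralise Picker's gains at $v$ from cherries centered at its neighbours $u$. Whenever Picker offers a cherry at $u$ that contains the edge $uv$, Chooser takes $uv$. This makes $v$ dangerous, so the walk moves to $v$ and $v$ becomes a center. Thus every edge at $v$ that is resolved enters $v$ on Chooser's side, or is offered at $v$, or is the one Chooser claims on leaving $v$. This holds regardless of which pair Picker selects, so the rule for choosing the cherry is irrelevant.

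The second step is the counting. Each visit to $v$ consumes three incident edges: Chooser's entering edge, and two offered edges of which Chooser takes one and Picker the other. Chooser arranges that $v$ is first entered at a time when all $d$ edges are unclaimed, for example by taking $uv$ at the first cherry near $v$. Chooser then keeps re-entering $v$ by always choosing, at each neighbour center, the edge back to $v$ when it is offered. Every Picker edge at $v$ then arises from a center visit. The number of visits is at most $\lfloor d/3\rfloor$, since each visit uses three edges and when fewer than three remain, at most two edges are left. Chooser takes these leftovers: if one edge remains, it is unpaired or offered together with a non-$v$ edge; if two remain, Chooser enters along one and, with a single remaining edge at $v$, no cherry is centered at $v$, so that edge is offered elsewhere and Chooser takes it. The total at $v$ is then $\deg_{G_P}(v)=\lfloor d/3\rfloor$.

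The main obstacle is showing that Chooser can actually force the walk back into $v$ repeatedly: a deterministic Picker rule at a neighbour might avoid offering $uv$, and new phases may start far away. My first attempt is to take $G$ to be a graph in which every neighbour of $v$ has few options. A natural choice is to attach to $v$ gadgets in which each neighbour's remaining edges, after Chooser enters it, include $uv$ in every possible cherry, using a degree-counting pigeonhole. Alternatively, I could accept leftover Picker edges at $v$ coming from neighbour centers and show they can be charged against the visit count. Failing a universal argument, I will exhibit a small explicit example, such as $d=3$ with $K_4$, or $d=5,6$, in which exhaustive case analysis over Picker's deterministic rules shows that $\lfloor d/3\rfloor$ is attained.
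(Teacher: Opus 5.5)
Your first two steps are the paper's argument: an untouched special vertex $v$, Chooser always taking the edge at $v$, and the forced cherry at $v$ giving Chooser two of every three edges. But the proposal does not close. You call ``forcing the walk back into $v$'' the main obstacle and then only list three possible remedies, none carried out. The fallbacks would not prove the statement in any case. For $d=3$ the claim is trivial, since Picker owns at most $\lfloor 3n/4\rfloor$ edges and so cannot have degree $2$ everywhere; $K_4$ therefore tells you nothing. An exhaustive check for $d=5,6$ covers only those degrees.

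The missing observation, already implicit in your step~2, is that you never need to force a re-entry.
\begin{itemize}
\item Under Chooser's rule, any offered pair containing an edge at $v$ is either a cherry centered at $v$, where Picker gets one edge, or a pair with exactly one edge at $v$, which Chooser takes. An unpaired last edge is Chooser's. Hence $\deg_{G_P}(v)$ equals the number of cherries centered at $v$.
\item By the algorithm, such a cherry is offered only in two situations. Either $v$ is dangerous, i.e.\ directly after a Chooser entry into $v$, and the cherry then neutralizes $v$; or a new phase is opened at $v$.
\item So if no phase is ever opened at $v$ and $K$ cherries are centered there, there are at least $K$ entries. This gives $3K\le d$, whatever the deterministic selection rule and however rarely the walk returns.
\end{itemize}
Edges at $v$ that Picker's rule avoids offering from a neighbour are not a threat. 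They can only be offered at $v$ after an entry, opened as a phase at $v$, or taken by Chooser in the matching end-game.

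What actually has to be controlled is phase openings at $v$, since each one splits two edges $1{:}1$. This is also the hole in your two-leftover case: Picker may open a phase at $v$ when exactly two edges remain, and then gets $\lfloor d/3\rfloor+1$ when $d\equiv 2 \pmod 3$. Because the phase-start vertex is arbitrary in the algorithm, it can be placed away from $v$ whenever some other vertex still has two unoccupied edges. The paper's short proof tacitly relies on exactly this. The only remaining corner is when $v$ is the last such vertex, and that, not gadgets forcing returns, is where any further care should go.
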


\begin{proof}
	Chooser can designate an untouched vertex after the initial move as their \emph{special vertex}. Whenever Picker offers a cherry containing an edge incident to this special vertex, Chooser always claims that edge. According to Picker's strategy, their subsequent offer must be a cherry centered precisely at this special vertex, where Chooser claims one of the two edges, thereby accumulating at least two-thirds of the local edges.
\end{proof}

\subsection{Proof for Walker-Breaker Games}

We now provide the generalized proof of Theorem~\ref{WBd} and Theorem~\ref{WBgen}. By building upon the identical triplet-depletion mechanism developed for the Chooser-Picker framework, we establish that the defensive threshold scales directly to any neighborhood without global regularity constraints.

\begin{proof}[Proof of Theorem~\ref{WBd} and Theorem~\ref{WBgen}]
	Let $G = (V,E)$ be an arbitrary simple graph. We present a strictly reactive local response strategy for Breaker. Let Walker's sequential continuous walk be tracked by the sequence of vertices it visits: $v_1, v_2, v_3, \dots$. 
	
	Breaker's strategy is defined as follows: whenever Walker transitions through a vertex $v_i$ by entering via an edge $\{v_{i-1}, v_i\}$ and immediately departing via $\{v_i, v_{i+1}\}$, Breaker responds during their subsequent turn by claiming an arbitrary remaining unoccupied edge incident to this intermediate vertex $v_i$. If Walker gets trapped in a dead end (a vertex with no remaining unoccupied incident edges) and initiates a new walk component from an arbitrary vertex $u_1$ by claiming an edge $\{u_1, u_2\}$, Breaker plays as in case of $v_i$ sequence.
	
	The local edge accounting at any vertex $v \in V(G)$ follows an identical paradigm to the proof of Theorem~\ref{CPgen}. Each time Walker's walk transitions through $v$, the local neighborhood of $v$ loses exactly three unoccupied edges: two are claimed by Walker (one entering and one departing) and one is claimed reactively by Breaker. 
	
	Analogous to the Chooser-Picker triplet-depletion process, each complete local transition reduces the available degree of $v$ by exactly three, with Walker acquiring two edges and Breaker securing one edge. This cycle can be forced at $v$ for at most $\lfloor \deg_G(v)/3 \rfloor$ times before the remaining free degree at $v$ drops below $3$. The endpoints of Breaker's edges outside the Walker's walk only strictly increase the final degree in Breaker's induced subgraph $G_B$. 
	
	Thus, at the end of the game, Breaker's final degree satisfies $\deg_{G_B}(v) \ge \lfloor \deg_G(v)/3 \rfloor$ at every single vertex $v \in V$. If $G$ is $d$-regular, $\deg_G(v) = d$ for all $v \in V$, yielding $\deg_{G_B}(v) \ge \lfloor d/3 \rfloor$, which simultaneously completes the proofs of Theorem~\ref{WBd} and Theorem~\ref{WBgen}.
\end{proof}

\subsection{Proof for Connector-Breaker Games}

We provide a concise verification of Observation~\ref{conn}, evaluating how global connectivity constraints affect the local degree thresholds.

\begin{proof}[Proof of Observation~\ref{conn}]
	The proof employs a reactive strategy for Breaker that maintains an identical invariant to the $3$-regular Chooser-Picker base case: at the start of any turn, there exists at most one dangerous vertex on the board. 
	
	Connector opens the game by claiming an initial edge $e_1 = \{v_1, v_2\}$. Breaker immediately claims an adjacent edge $\{v_1, w\}$, leaving $v_2$ as the unique dangerous vertex. Because Connector is bound by a strict global constraint to keep their induced subgraph connected at all times, they cannot claim independent edges or connect two distinct dangerous points (as another one never exists). Furthermore, it is strictly suboptimal for Connector to touch an already safe vertex. Thus, Connector is forced to extend their structure from the unique dangerous vertex to an unvisited vertex, moving exactly like Chooser along the cherry-offering walk described in Section 4.1. Breaker reactively claims the third and final edge of the dangerous vertex, rendering it safe and transferring the vulnerability to the newly reached vertex. By induction, this chain ensures that $\deg_B(v) \ge 1$ holds for all $v \in V(G)$.
\end{proof}

	To conclude, we demonstrate why this defense fails to scale to higher degrees ($d > 3$). When $d = 3$, any visited vertex is immediately exhausted (receiving 2 edges for Connector and 1 for Breaker). However, if $d > 3$, the vertices along Connector's existing path still possess multiple unoccupied incident edges. In this higher-degree regime, Connector can claim an unoccupied internal edge between two previously visited vertices. Because this single move modifies the degree count at both endpoints simultaneously, it shifts the local balance to an unfavorable $3:1$ ratio. Breaker can only neutralize one threat (reducing it to $3:2$), leaving the other at a $3:1$ state. In the next round, Connector can repeat this internal shortcutting maneuver, creating two new $3:1$ points. Facing three accumulating threats, Breaker's single available move is insufficient. By expanding along a continuous path, Connector sequentially drives the ratio to $4:1$ and eventually to an unblockable $5:1$ ratio on the next turn, completely breaking the $\lfloor d/3 \rfloor$ defensive threshold.

\subsection{Proofs for Picker-Chooser Games}

We now verify the strategic thresholds for the P-C framework stated in Observations~\ref{PCP} and \ref{PCC}. Recall that in this framework, Picker acts as Maker whose objective is to force an isolated vertex in Chooser's subgraph (or equivalently, to claim all incident edges at some vertex in $G_P$).

\begin{proof}[Proof of Observation~\ref{PCP}, Case $Q_3$]
	Picker can force an isolated vertex in Chooser's subgraph on $Q_3$ via an explicit four-step sequential offering strategy. 
	
	In the first step, Picker offers a pair of opposite parallel edges belonging to the same dimension. Chooser claims one, and the remaining edge is awarded to Picker. In the second step, Picker offers the remaining two opposite parallel edges in the exact same directional orientation. Regardless of Chooser's choices, Picker successfully secures two parallel edges on a single two-dimensional face of the cube. 
	
	In the third step, Picker offers the remaining two edges belonging to that specific face. Following Chooser's response, Picker accumulates three edges on that face, which creates two distinct vertices, say $A$ and $B$, each incident to exactly two Picker-claimed edges and one remaining unoccupied edge. In the fourth and final step, Picker offers these two remaining third edges incident to $A$ and $B$ as a pair. Chooser is forced to select one, leaving the other to Picker. The edge awarded to Picker necessarily completes the local neighborhood of either $A$ or $B$, ensuring that all three incident edges at that vertex belong to $G_P$. This forces an isolated vertex in Chooser's subgraph, completing the proof for $Q_3$.
\end{proof}

Following the same paradigm of localized containment, the proof of Picker's win on the Petersen graph (where Picker secures two incident edges at a vertex on the outer cycle in the first two steps, then replicates this on the inner cycle at a vertex non-adjacent to the first, and finally concludes by offering the two remaining unclaimed edges at these degree-two vertices to secure the final required edge), as well as Chooser's defense on $K_4$ and $K_{3,3}$, are straightforward but tedious finite game trees, which we leave to the interested reader.

\begin{proof}[Proof of Observation~\ref{PCP}, Case $Q_4$]
	We extend the hypercube isolation strategy to the $4$-dimensional hypercube $Q_4$. Let us view $Q_4$ (which is $4$-regular) as a perfect matching on two disjoint $Q_3$ graphs: $Q^{(1)}_3$ and $Q^{(2)}_3$.
	
	Picker initiates the strategy by executing the first three steps of the $Q_3$-isolation strategy simultaneously and independently on both subcubes. Consequently, Picker secures three edges on a designated face within \emph{both} $Q^{(1)}_3$ and $Q^{(2)}_3$. At this stage of the game, Picker possesses two vertices in the first subcube (labeled $A$ and $B$) and two vertices in the second subcube (labeled $C$ and $D$) such that each of these four vertices is incident to exactly two Picker-claimed edges within their respective subcubes, leaving two unoccupied edges incident to each in $G$. 	
	The structural analysis branches into three cases:
	\begin{enumerate}
		\item[(i):] The vertex pairs $\{A, B\}$ and $\{C, D\}$ share zero cross-cube edges. In this scenario, Picker offers the two remaining internal subcube edges incident to $A$ and $B$ as a pair, and subsequently offers the two remaining internal subcube edges incident to $C$ and $D$ as another pair. Following these rounds, Picker achieves a degree of three at two distinct vertices across the subcubes. Picker then finishes the game by offering the pair of their remaining fourth cross-cube edges, forcing all four incident edges of one vertex into $G_P$.
		
		\item[(ii):] The vertex pairs share exactly one cross-cube edge. Without loss of generality, let $A$ and $C$ be adjacent via a cross-cube edge, while $B$ and $D$ are non-adjacent. Picker first offers the remaining internal subcube edges incident to the non-adjacent vertices $B$ and $D$. Chooser claims one, forcing Picker to achieve a degree of three at the other (say, at vertex $B$). Next, Picker offers the internal subcube edges incident to $A$ and $C$, forcing Picker to secure a degree of three at one of them (say, at vertex $C$). Finally, Picker offers the pair consisting of the fourth edge of $B$ and the fourth edge of $C$. Chooser's forced choice guarantees that Picker achieves all four incident edges at either $B$ or $C$.
		
		\item[(iii):] The vertex pairs share exactly two cross-cube edges, meaning $\{A, C\}$ and $\{B, D\}$ are both edges in $G$. Picker opens this phase by offering these two cross-cube edges $\{A, C\}$ and $\{B, D\}$ as a pair. Picker receives one of them, say $\{A, C\}$. This move immediately raises the degree count of both $A$ and $C$ in $G_P$ to three. Picker then successfully concludes the game by offering the pair consisting of the final remaining fourth edges incident to $A$ and $C$, ensuring a full degree of four at one of these vertices.
	\end{enumerate}

	This explicit construction for $Q_4$ demonstrates that in the Picker-Chooser framework, the defensive capabilities are significantly weaker, showing that even the classical $\lfloor d/4 \rfloor$ lower bound cannot be universally guaranteed for Chooser.
\end{proof}

\section{Further Remarks and Open Questions} \label{furt}

As demonstrated throughout this paper, in the context of \emph{Chooser-Picker} degree games, we have successfully established a universal lower bound that strictly outperforms the classical $\lfloor d/4 \rfloor$ baseline. This result provides a definitive affirmative answer to the analogue of the first humiliating question posed by J. Beck, confirming that the defense barrier can be shifted to $\lfloor d/3 \rfloor$ under the C-P rules.

Analogous dynamic offering frameworks can be effectively adapted to \emph{Walker-Breaker} games. The structural concepts of constrained attackers, such as Walker-type or Connector-type players, have received substantial attention in recent literature. 

In \emph{Picker-Chooser} games, the underlying strategic dynamics operate under fundamentally different principles than those in the C-P framework. Nonetheless, we have characterized a sharp topological contrast on sparse environments: we identified specific low-order regular graphs where Picker can successfully force an isolation (such as $Q_3$ and even $Q_4$) and only two where Chooser wins (such as $K_4$ and $K_{3,3}$).

Naturally, the most prominent and historically challenging positional setups remain the classical \emph{Maker-Breaker} games, from which these foundational problems originate. While the grand conjecture states that the true strategic equilibrium satisfies $\delta_B(G) = (1/2 - o(1))d$, even the relaxed bound $\delta_B(G) \ge (1/4 + \epsilon)d$ remains wide open for arbitrary graphs. In our companion paper \cite{cube}, we demonstrated that for specific structural classes of $d$-regular boards (such as hypercubes and toroidal grids), the $\lfloor d/4 \rfloor$ bound can indeed be overcome to yield $\delta_B(G) \ge \lfloor d/3 \rfloor$.

To further evaluate the boundary between Breaker's defensive limitations and strategic advantages under the Maker-Breaker rules, it is instructive to consider the framework of biased games. While outperforming the $\lfloor d/4 \rfloor$ baseline remains extremely difficult in the unbiased setting, Breaker's task becomes straightforwardly manageable even under a minimal move advantage, as demonstrated by the following local immediate-response property:

\begin{observation} \label{obs:biased_mb}
	In the $(1:2)$ biased Maker-Breaker degree game played on an arbitrary $d$-regular graph $G$, Breaker has a winning strategy to secure a minimum degree of at least $\lfloor d/2 \rfloor$ at every vertex.
	
	Moreover, in the general $(1:2k)$ biased Maker-Breaker degree game, Breaker can guarantee a minimum degree of at least $\lfloor \frac{k}{k+1}d \rfloor$ at every single vertex.
\end{observation}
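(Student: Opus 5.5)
The plan is an immediate-response strategy in which Breaker splits the bias evenly between the two endpoints of Maker's edge. It suffices to treat the $(1:2k)$ game, since $k=1$ gives the $(1:2)$ statement: $\lfloor \tfrac{1}{2}d\rfloor=\lfloor d/2\rfloor$. In fact I would prove the stronger local bound $\deg_B(v)\ge \lfloor \tfrac{k}{k+1}\deg_G(v)\rfloor$ for every vertex of an arbitrary simple graph, in the spirit of Theorem~\ref{CPgen}.

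\emph{Strategy.} Whenever Maker claims an edge $uv$, Breaker uses his $2k$ edges in the following turn as follows. He claims $\min(k,f(u))$ unclaimed edges at $u$ and $\min(k,f(v))$ unclaimed edges at $v$, where $f(x)$ is the number of free edges at $x$ at that moment. Any surplus moves are spent arbitrarily. These two sets are disjoint, since in a simple graph the only edge incident to both $u$ and $v$ is $uv$ itself, which Maker has just taken. Hence $2k$ moves always suffice to carry out both responses. Because Breaker answers right after Maker's move, there is no interleaving: between two consecutive Maker edges at a fixed vertex $v$, Breaker's response at $v$ has already been completed.

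\emph{Local accounting at a fixed vertex $v$.} Let $m$ be the number of Maker edges at $v$ at the end of the game, and order them in time. Take any Maker edge at $v$ other than the last one. Right after it, at least one free edge remains at $v$, namely the later Maker edge. So Breaker's response claims $\min(k,f(v))$ edges at $v$. If $f(v)<k$, this response exhausts $v$ and no later Maker edge at $v$ could exist, a contradiction. Therefore each of the first $m-1$ Maker edges at $v$ is followed by $k$ Breaker edges at $v$, and these blocks are pairwise disjoint. Counting edges at $v$ gives
\[ (m-1)(k+1)+1\le \deg_G(v), \qquad\text{so}\qquad m\le \left\lceil \frac{\deg_G(v)}{k+1}\right\rceil . \]
Every edge is eventually claimed, so
\[ \deg_B(v)=\deg_G(v)-m\ \ge\ \deg_G(v)-\left\lceil \frac{\deg_G(v)}{k+1}\right\rceil=\left\lfloor \frac{k\,\deg_G(v)}{k+1}\right\rfloor . \]
Extra Breaker edges landing at $v$, from responses at neighbours or from surplus moves, only help. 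For $d$-regular $G$ this gives $\lfloor \tfrac{k}{k+1}d\rfloor$, and with $k=1$ it gives $\lfloor d/2\rfloor$.

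The only delicate step is the claim that every non-final Maker edge at $v$ receives a full block of $k$ Breaker replies. This rests on two facts: the responses at $u$ and $v$ never compete for the same edge, and Breaker's reply is completed before Maker can move again. Both follow from simplicity of $G$ and the turn structure. The rest is the floor/ceiling identity $n-\lceil n/(k+1)\rceil=\lfloor kn/(k+1)\rfloor$.
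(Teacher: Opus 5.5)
Your proposal is correct and uses the same immediate-response strategy as the paper: Breaker spends $k$ of his $2k$ moves at each endpoint of Maker's latest edge. Your per-vertex count shows that each non-final Maker edge at $v$ is followed by a full block of $k$ Breaker edges, so $m\le\lceil \deg_G(v)/(k+1)\rceil$; this supplies the accounting the paper leaves implicit in its one-line conclusion, and it extends the bound to arbitrary simple graphs.
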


\begin{proof}
	This follows from a direct, localized immediate-response strategy for Breaker. Whenever Maker claims an edge $e = \{u, v\} \in E(G)$, Breaker responds by claiming exactly one available edge incident to $u$ and one incident to $v$ in the $(1:2)$ game. In the general $(1:2k)$ game, Breaker allocates exactly $k$ moves to claim available edges incident to $u$ and $k$ incident to $v$. 
	Consequently, Breaker successfully secures the required fraction of the total degree at every vertex, completing the proof.
\end{proof}

To stimulate future research along these intersecting boundaries of local degree security, we submit the following open problems to the community:

\begin{open}
	Can Picker achieve a minimum degree larger than $\lfloor d/3 \rfloor$ in the standard Chooser-Picker degree game played on an arbitrary $d$-regular graph $G$?
\end{open}

\begin{open}
	Can Picker (acting as Maker) win the $1$-degree game on $Q_d$ for any $d>4$?
\end{open}

\begin{open}
	What is the value of $\delta_B(G)$ in the classical M-B degree game played on general $d$-regular graphs? In particular, is it true that Breaker can achieve $\lfloor d/3 \rfloor$ lower bound that we established for Picker in the Chooser-Picker framework?
\end{open}

\section*{Acknowledgments}
The author would like to express his gratitude to Andr\'as Pluh\'ar and Andr\'as London. Their insightful comments, valuable suggestions, and constant encouragement were indispensable throughout the writing of this paper.

Declaration on the use of generative AI. Gemini was used in a limited manner during manuscript preparation for language editing and organizational assistance. 


\end{document}